\numberwithin{equation}{section}
\newtheorem{thm}{Theorem}[section]
\newtheorem{prop}[thm]{Proposition}
\newtheorem{lem}[thm]{Lemma}
\newtheorem{cor}[thm]{Corollary}
{\bf}{\it}
\newtheorem{fthm}{Theorem}{\bf}{\it}
{\bf}{\it}
\newtheorem{fcor}[fthm]{Corollary}{\bf}{\it}
\newtheorem{fconj}[fthm]{Conjecture}{\bf}{\it}
{\bf}{\it}
\theoremstyle{definition}
\newtheorem{defn}[thm]{Definition}
\theoremstyle{remark}
\newtheorem{ex}[thm]{Example}
\newtheorem{rem}[thm]{Remark}
{\bf}{\it}
\newtheorem{definition and corollary}[thm]{Definition and Corollary}
\newcommand{\tbB}{\widetilde{\mathbf B}}
\newcommand{\tbG}{\widetilde{\mathbf G}}
\newcommand{\al}{\alpha}
\newcommand{\af}{\mathrm{af}}
\newcommand{\tbP}{\widetilde{\mathbf P}}
\newcommand{\C}{{\mathbb C}}
\newcommand{\cO}{{\mathcal O}}
\newcommand{\bG}{{\mathbf G}}
\newcommand{\Par}{{\mathcal P}}
\newcommand{\tth}{{\mathtt h}}
\newcommand{\gdim}{\mathrm{gdim}}
\newcommand{\ro}{\mathrm{rot}}
\newcommand{\ce}{\mathrm{c}}
\newcommand{\la}{\lambda}
\newcommand{\La}{\Lambda}
\newcommand{\Gr}{\mathrm{Gr}}
\newcommand{\Sym}{\mathfrak{S}}
\newcommand{\tSym}{\widetilde{\mathfrak{S}}}
\newcommand{\MID}{\! \! \mid}
\newcommand{\g}{\mathfrak{g}}
\newcommand{\tT}{\widetilde{T}}
\newcommand{\tI}{\mathtt{I}}
\newcommand{\tP}{\mathtt{P}}
\newcommand{\sP}{\mathsf{P}}
\newcommand{\gn}{\mathfrak{n}}
\newcommand{\bv}{\mathbf{v}}
\newcommand{\bO}{\mathbb{O}}
\renewcommand{\P}{\mathbb{P}}
\newcommand{\sX}{\mathscr{X}}
\newcommand{\Z}{\mathbb{Z}}
\newcommand{\Ga}{\mathbb G_a}
\newcommand{\Gm}{\mathbb G_m}
\title[A geometric realization of Chromatic symmetric functions]{A geometric realization of the chromatic symmetric function of a unit interval graph}
\author{Syu \textsc{Kato}}
\address{
Department of Mathematics, Kyoto University, Oiwake Kita-Shirakawa Sakyo Kyoto 606-8502 JAPAN}
\email{syuchan@math.kyoto-u.ac.jp}
\subjclass[2020]{14N15,20G44}
\begin{document}

\begin{abstract} 
Shareshian-Wachs, Brosnan-Chow, and Guay-Pacquet [Adv. Math. {\bf 295} (2016), {\bf 329} (2018), arXiv:1601.05498] realized the chromatic (quasi-)symmetric function of a unit interval graph in terms of Hessenberg varieties. Here we present another realization of these chromatic (quasi-)symmetric functions in terms of the Betti cohomology of the variety $\mathscr X_\Psi$ defined in [arXiv:2301.00862]. This yields a new inductive combinatorial expression of these chromatic symmetric functions. Based on this, we propose a geometric refinement of the Stanley-Stembridge conjecture, whose validity would imply the Shareshian-Wachs conjecture.
\end{abstract}
\maketitle

\section*{Introduction}
A unit interval graph $\Gamma$ is a graph induced by the overlapping pattern of finite segments of same size in $[0,1]$ (without covering relations). Brosnan-Chow \cite{BC18} and Guay-Pacquet \cite{GP16} connected the Betti cohomology of Hessenberg varieties attached to $\Gamma$ with the chromatic quasisymmetric function $X_\Gamma (t)$ of $\Gamma$ defined by Shareshian-Wachs \cite{SW16}, which reduces to Stanley's chromatic symmetric function $X_\Gamma$ \cite{Sta95} upon $t=1$. This opens a way to understand the context of the Stanley-Stembridge conjecture \cite{SS93} on the $e$-positivity of the chromatic symmetric function of a $(3+1)$-free poset using geometry since Guay-Pacquet \cite{GP13} reduces the Stanley-Stembridge conjecture to these cases. This development attracts a lot of attention (see e.g. \cite{CHSS,AP18,HP19,AB21,AB22,SP,VX21,AN22a,AN22b}). Although the Stanley-Stembridge conjecture itself was recently settled by Hikita \cite{Hik24a}, we still do not fully understand all the questions arising from this line of research, including its graded version (the Shareshian-Wachs conjecture \cite[Conjecture 1.3]{SW16}).

In this paper, we realize the chromatic symmetric function of a unit interval graph using the variety $\mathscr X_\Psi$ attached to a Dyck path defined in \cite{Kat23a} and discuss its consequences. Roughly speaking, we have correspondences
\begin{equation}
\{\text{unit interval graphs}\} \Leftrightarrow \{\text{Hessenberg functions}\}\Leftrightarrow \{\text{Dyck paths}\}.\label{eqn:introcorr}
\end{equation}
We will write $\mathscr X_\Gamma$ instead of $\mathscr X_\Psi$ when $\Gamma$ corresponds to $\Psi$ through (\ref{eqn:introcorr}).

Let $\La$ be the ring of symmetric functions. Fix $n \in \Z_{>0}$ and set $G:=\mathop{GL}(n,\C)$, $G[\![z]\!] := \mathop{GL}(n,\C[\![z]\!])$. Let $\tP^+$ be the set of partitions of length $\le n$. For each (labelled) unit interval graph $\Gamma$ with $n$ intervals, there exists a smooth projective algebraic variety $\mathscr X_\Gamma$ (defined in \cite{Kat23a}) equipped with a $(G[\![z]\!] \times \C^{\times})$-equivariant map
$$\mathsf{m}_{\Gamma} : \mathscr X_\Gamma \to \Gr,$$
where $\Gr$ is the affine Grassmannian of $\mathop{GL}(n)$.

For each $\la \in \tP^+$, we have a $G[\![z]\!]$-orbit $\mathbb O_\la \subset \Gr$ that yields the intersection cohomology complex $\mathsf{IC}_\la$ (see \cite{BBDG,Lus81a}). The geometric Satake correspondence \cite{MV07} equips $H^{\bullet} ( \mathsf{IC}_\la )$ with the structure of a $\mathop{GL}(n,\C)$-module whose character is the Schur function $s_\la \in \La$.

\begin{fthm}[$\doteq$ Theorem \ref{thm:main}]\label{fthm:real}
We have $( \mathsf{m}_\Gamma )_*\C_{\mathscr X_\Gamma} \cong \bigoplus _{\la \in \tP^+} V^{\la} (\mathscr X_\Gamma) \boxtimes \mathsf{IC}_\la$, where $V^\la (\mathscr X_\Gamma) \in D^b(\mathrm{pt})$ is a $($graded$)$ vector space. Using this, we have
$$X_\Gamma (t) = t^{-d_\Gamma}\sum_{\la \in \tP^+} s_\la \cdot  t^{i/2} \dim \, H^{i} ( V^\la (\mathscr X_\Gamma) ) \hskip 5mm \exists d_{\Gamma} \in \Z_{\ge 0}.$$
\end{fthm}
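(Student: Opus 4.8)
The plan is to deduce the statement from the decomposition theorem on $\Gr$, the geometric Satake equivalence, and the known Hessenberg-variety description of $X_\Gamma(t)$, with the geometry of $\mathscr X_\Gamma$ from \cite{Kat23a} serving as the bridge. \textbf{Step 1: the direct-sum decomposition.} Since $\mathscr X_\Gamma$ is smooth projective and $\mathsf m_\Gamma$ proper, $(\mathsf m_\Gamma)_*\C_{\mathscr X_\Gamma}$ is pure, so the decomposition theorem \cite{BBDG} splits it in $D^b(\Gr)$ into a finite sum of shifts of simple perverse sheaves whose supports, being $(G[\![z]\!]\times\C^\times)$-stable, are orbit closures $\overline{\mathbb O_\la}$. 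As each $\mathbb O_\la$ is simply connected, the only $G[\![z]\!]$-equivariant simple perverse sheaves on $\Gr$ are the $\mathsf{IC}_\la$, so grouping the summands by $\la$ and recording the internal $\C^\times$-weights yields $(\mathsf m_\Gamma)_*\C_{\mathscr X_\Gamma}\cong\bigoplus_{\la\in\tP^+}V^\la(\mathscr X_\Gamma)\boxtimes\mathsf{IC}_\la$ with $V^\la(\mathscr X_\Gamma)\in D^b(\mathrm{pt})$ (a finite sum, as the image of the projective $\mathscr X_\Gamma$ meets only finitely many $\mathbb O_\la$); the expected cohomological evenness of $\mathscr X_\Gamma$ from \cite{Kat23a}, together with that of the $\mathsf{IC}_\la$, puts each $V^\la(\mathscr X_\Gamma)$ in even degrees, which is what makes the exponents $i/2$ below literal.

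\textbf{Step 2: cohomology and Satake.} Hypercohomology over $\Gr$ gives $H^\bullet(\mathscr X_\Gamma,\C)\cong\bigoplus_{\la\in\tP^+}H^\bullet(V^\la(\mathscr X_\Gamma))\otimes H^\bullet(\mathsf{IC}_\la)$, and by \cite{MV07} the factor $H^\bullet(\mathsf{IC}_\la)$ is the irreducible $GL(n,\C)$-module of character $s_\la$, so that $H^\bullet(\mathscr X_\Gamma)$ acquires the $GL(n,\C)$-action induced from $G[\![z]\!]$ via Satake. Taking $d_\Gamma\in\Z_{\ge0}$ to be half the least cohomological degree occurring among the $V^\la(\mathscr X_\Gamma)$, the bigraded $GL(n,\C)$-character of $H^\bullet(\mathscr X_\Gamma)$ equals $t^{-d_\Gamma}\sum_{\la\in\tP^+}s_\la\cdot\sum_i t^{i/2}\dim H^i(V^\la(\mathscr X_\Gamma))$, and everything reduces to identifying this symmetric function with $X_\Gamma(t)$.

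\textbf{Step 3: identification with $X_\Gamma(t)$ — the main obstacle.} Here I would use the construction of $\mathscr X_\Gamma$ in \cite{Kat23a} to compare $H^\bullet(\mathscr X_\Gamma)$ with $H^\bullet(\mathrm{Hess}(\Gamma))$, $\mathrm{Hess}(\Gamma)$ the Hessenberg variety attached to $\Gamma$ under (\ref{eqn:introcorr}). The natural device is the loop-rotation $\C^\times$-action on $\mathscr X_\Gamma$: its fixed locus (expected to be, or to contain, $\mathrm{Hess}(\Gamma)$), or the associated Bia\l ynicki-Birula decomposition, should link $H^\bullet(\mathscr X_\Gamma)$ with $H^\bullet(\mathrm{Hess}(\Gamma))$ compatibly with the residual maximal torus, after which one invokes Brosnan--Chow \cite{BC18} and Guay-Pacquet \cite{GP16}, by which $\sum_i\mathrm{Frob}\,H^{2i}(\mathrm{Hess}(\Gamma))\,t^i=\omega X_\Gamma(t)$ for the Tymoczko dot action of $S_n$, and then compares Schur expansions. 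The crux is the compatibility of the two module structures: Satake furnishes only a $GL(n,\C)$-action, whereas $X_\Gamma(t)$ is carried by an $S_n$-action, so one must produce a Schur--Weyl-type duality internal to $\mathscr X_\Gamma$ identifying the Satake multiplicity $V^\la(\mathscr X_\Gamma)$ of $\mathsf{IC}_\la$ with the graded multiplicity of the Specht module $S^{\la'}$ in $H^\bullet(\mathrm{Hess}(\Gamma))$, the transpose reflecting the $\omega$ (hence Langlands duality). Controlling this correspondence is the step I expect to cost the most work.

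\textbf{An alternative, more self-contained route} — presumably the one yielding the advertised new inductive combinatorial expression — is to show directly that $(\mathsf m_\Gamma)_*\C_{\mathscr X_\Gamma}$ satisfies a recursion mirroring the three-term linear relations among the $X_\Gamma(t)$ under the elementary box-move on Dyck paths (cf.\ \cite{GP16,AN22a}): for such a move one identifies a geometric relation among the corresponding $\mathscr X_\Gamma$ over $\Gr$ (a $\P^1$-bundle or a blow-up diagram) producing a distinguished triangle, pushes it forward, and — the $\mathsf{IC}_\la$ being the only $G[\![z]\!]$-equivariant simples — extracts a recursion for the $V^\la(\mathscr X_\Gamma)$ matching the known one for the Schur coefficients of $X_\Gamma(t)$; with the trivial base cases (for instance $K_n$, where only $s_{(1^n)}$ occurs and the $t$-coefficient is $X_{K_n}(t)/e_n=[n]_t!$, and the edgeless graph, where $X_\Gamma(t)=s_1^n$), induction on the number of edges of $\Gamma$ closes the argument. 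In this route the hard part is pinning down which operation on $\mathscr X_\Gamma$ realizes the box-move and verifying that the resulting triangle splits compatibly with the Satake decomposition, i.e.\ tracking how the $V^\la(\mathscr X_\Gamma)$ propagate along the recursion.
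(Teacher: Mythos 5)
Your Step 1 is exactly the paper's Corollary \ref{cor:dec} (decomposition theorem for the proper map $\mathsf{m}_\Gamma$, with supports forced to be orbit closures in $\Gr^{(n)}$), and Step 2 is harmless. The genuine gap is Step 3: in both of your routes the identification of $t^{-d_\Gamma}\sum_\la s_\la\,\gdim V^\la(\mathscr X_\Gamma)$ with $X_\Gamma(t)$ is left as an acknowledged obstacle rather than proved. Your primary route needs two unproved inputs: that the $\C^\times$-fixed locus (or Bia\l ynicki-Birula structure) of $\mathscr X_\Gamma$ really recovers $H^\bullet(\mathrm{Hess}(\Gamma))$ together with its dot action, and a Schur--Weyl-type matching of the Satake multiplicity spaces $V^\la(\mathscr X_\Gamma)$ with graded Specht multiplicities; neither is supplied, and the paper deliberately avoids this entire circle of ideas (its proof is explicitly independent of \cite{BC18}).

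What the paper actually does is close to your ``alternative route,'' but with the two missing ingredients made precise. First, it transfers the problem from $\Gr$ to the nilpotent cone: since $T^*_\Psi X\subset\mathscr X_\Psi$ is open with $\tbG\cdot T^*_\Psi X=\mathscr X_\Psi$ (Corollary \ref{cor:Pdec}) and $\mathsf{IC}_\la|_{\mathcal N}=\mathsf{IC}^{\mathrm{Spr}}_\la$ (Theorem \ref{thm:lusztig}), the multiplicities $V^\la_\Psi$ are computed from $\mathscr S_\Psi=(\mu_\Psi)_*\C_{T^*_\Psi X}$. Then it verifies the Abreu--Nigro characterization (Theorem \ref{thm:AN}), and here are the two points your sketch does not cover. (i) The sheaf-level modular relation is not obtained from a new $\P^1$-bundle or blow-up diagram that you would have to find; it is the isomorphism $\mathscr S_{\Psi}\oplus\mathscr S_{\Psi}[-2]\cong\mathscr S_{\Psi_0}[-2]\oplus\mathscr S_{\Psi_1}$ imported from \cite[Proposition 6.7]{SP}, which immediately gives the three-term recursion for each $\gdim V^\la_\Psi$ after the grading normalization $t^{-|\Psi|}$. (ii) Your alternative route omits the factorization law altogether, and this is not optional: the modular law together with the base cases you name ($K_n$ and the edgeless graph) does not determine the function --- the Abreu--Nigro uniqueness also requires multiplicativity under disjoint union, since the recursion terminates at disjoint unions of complete graphs. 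In the paper this is the most technical part: it uses Lusztig's parabolic induction of perverse sheaves on nilpotent cones (Theorem \ref{thm:ind}), a smooth base change through $G\times^P M_{n_1,n_2}$, and the Littlewood--Richardson rule $\mathsf{IC}^{\mathrm{Spr}}_\la\star\mathsf{IC}^{\mathrm{Spr}}_\mu\cong\bigoplus_\nu(\mathsf{IC}^{\mathrm{Spr}}_\nu)^{\oplus c^\nu_{\la,\mu}}$ to get $X'_{\tth}(t)=X'_{\tth_1}(t)\cdot X'_{\tth_2}(t)$. With those two steps in place the complete-graph computation ($\mathscr X_\Psi=X$, $V^{(1^n)}_\Psi=H^\bullet(X)$, value $[n]_t!\,e_n$) finishes the induction; without them, neither of your routes closes.
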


Theorem \ref{fthm:real} does not involve the Frobenius characteristic as in \cite{SW16,BC18}. In this sense, we might say that Theorem \ref{fthm:real} is a more direct realization of $X_\Gamma$ compared with [{\it loc. cit.}].

Let $P_\af$ be the weight lattice of the affinization of $\mathfrak{gl}(n)$ generated by $\varLambda_0,\ldots,\varLambda_n$ and $\delta$. Let $\tSym$ be the Weyl group of type $\mathsf{A}_{n-1}^{(1)}$ with its generators $\{s_i\}_{i=1}^n$, that acts on $\C P_\af$. We have a projection
$$\mathsf{pr}: \C P_{\af} \stackrel{e^{\varLambda_0}, e^{\delta} \mapsto 1}{\longrightarrow} \C [X_1^{\pm 1},\ldots,X_n^{\pm 1}]^{\Sym_n} = \{\text{space of characters of }\mathop{GL}(n,\C)\}.$$
Let $X_{\Gamma}^{(n)}$ be the image of $X_{\Gamma} \in \La$ under the truncation map $\La \rightarrow \Z [X_1,\ldots,X_n]$.

For each $0 \le h \le i \le n$, we set
$$\mathtt{S}_{i,h} := (s_{h-1} \cdots s_1 s_n \cdots s_{i+1} s_i + s_{h-2} \cdots s_n \cdots s_i + \cdots + s_{i+1}s_i+ s_i + 1 ) \in \Z \tSym_.$$
Applying the localization theorem to Theorem \ref{fthm:real}, we deduce:

\begin{fthm}[$\doteq$ Theorem \ref{thm:alg}]\label{fthm:char}
For each unit interval graph $\Gamma$, we have a sequence $h_1=0\le h_2 \le \cdots \le h_n < n$ $($specified in $\S \ref{subsec:statement})$ such that
$$X_\Gamma^{(n)} = \mathsf{pr}\Bigl( \left( \mathtt{S}_{1,0} \cdot \mathtt{S}_{2,h_2} \cdot \cdots \cdot \mathtt{S}_{n,h_n} \right) e^{\varLambda_n} \Bigr) \in \Z [X_1,\ldots,X_n].$$
\end{fthm}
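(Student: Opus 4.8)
The plan is to derive Theorem \ref{fthm:char} from Theorem \ref{fthm:real} by applying equivariant localization to the pushforward $(\mathsf m_\Gamma)_* \C_{\mathscr X_\Gamma}$ along $\mathsf m_\Gamma : \mathscr X_\Gamma \to \Gr$. First I would unwind what Theorem \ref{fthm:real} gives at the level of characters: the decomposition $(\mathsf m_\Gamma)_*\C_{\mathscr X_\Gamma} \cong \bigoplus_{\la} V^\la(\mathscr X_\Gamma) \boxtimes \mathsf{IC}_\la$, combined with the geometric Satake identification $\ch H^\bullet(\mathsf{IC}_\la) = s_\la$, shows that the $(G[\![z]\!]\times \C^\times)$-equivariant ``character'' of the total cohomology $H^\bullet(\mathscr X_\Gamma)$, interpreted inside $\C P_\af$ via the $\C^\times$-grading and the $\mathop{GL}(n)$-action coming from Satake, projects under $\mathsf{pr}$ to $t^{-d_\Gamma}\sum_\la s_\la \cdot t^{i/2}\dim H^i(V^\la)$, which is $X_\Gamma^{(n)}(t)$, and hence to $X_\Gamma^{(n)}$ at $t=1$. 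So the content of Theorem \ref{fthm:char} is to compute this same class in $\C P_\af$ by a fixed-point calculation on $\mathscr X_\Gamma$ (or, more precisely, on the ``big cell'' side where the localization is transparent), and to identify the answer with $(\mathtt S_{1,0}\cdot \mathtt S_{2,h_2}\cdots \mathtt S_{n,h_n})e^{\varLambda_n}$.

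The key steps, in order: (1) Recall from \cite{Kat23a} the construction of $\mathscr X_\Gamma=\mathscr X_\Psi$ as an iterated fibration / tower built from the Dyck path $\Psi$ encoding $\Gamma$ via the correspondence (\ref{eqn:introcorr}); extract the sequence $h_1=0\le h_2\le\cdots\le h_n<n$ from the Hessenberg function of $\Gamma$ (this is the datum to be specified in $\S\ref{subsec:statement}$), and identify the $(G[\![z]\!]\times\C^\times)$-action and the torus $\tT\subset G[\![z]\!]\times\C^\times$ relative to which we localize. (2) Apply the Atiyah--Bott / Lefschetz localization theorem (or, equivalently for the equivariant Euler characteristic in $\C P_\af$, a cell-by-cell count) to $\mathscr X_\Gamma$; because $\mathscr X_\Gamma$ is smooth projective and the tower structure gives a Bialynicki-Birula-type decomposition whose cells are indexed step-by-step, the equivariant character of $H^\bullet(\mathscr X_\Gamma)$ becomes a sum over fixed points with weights that are monomials in the $e^{\varLambda_j}$ twisted by roots. (3) Organize this sum: the $n$-th stage of the tower contributes, for the choices available at that stage, exactly the operator $\mathtt S_{n,h_n}$ acting on $\C P_\af$ — the point being that $\mathtt S_{i,h}=\sum (\text{initial subwords of } s_{h-1}\cdots s_1 s_n\cdots s_i)$ is precisely the ``partial Demazure / string'' sum that records the fixed points created when one adjoins the $i$-th interval subject to the constraint $h_i=h$. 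Iterating from $i=n$ down to $i=1$ and starting from the base point $e^{\varLambda_n}$ (the class of the distinguished $\tT$-fixed point / the minimal cell) yields $(\mathtt S_{1,0}\cdot\mathtt S_{2,h_2}\cdots\mathtt S_{n,h_n})e^{\varLambda_n}$. (4) Finally apply $\mathsf{pr}$ (setting $e^{\varLambda_0},e^\delta\mapsto 1$): on the left this collapses the affine weight bookkeeping to an honest $\mathop{GL}(n)$-character, matching $\mathsf{pr}$ of the localization output with the Satake-side computation of step (1), giving $X_\Gamma^{(n)} = \mathsf{pr}((\mathtt S_{1,0}\cdots\mathtt S_{n,h_n})e^{\varLambda_n})$.

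The main obstacle I anticipate is step (3): showing that the combinatorics of the Bialynicki-Birula cells of the tower $\mathscr X_\Gamma$ — equivalently the indexing of its $\tT$-fixed points and the weights attached to them — is governed exactly by the operators $\mathtt S_{i,h_i}$ and not merely by some closely related but differently normalized sum. This requires a careful bookkeeping of: which affine Weyl group elements $w\in\tSym$ label the fixed points of the $i$-th stage given the constraint imposed by $h_i$; how the $\C^\times$-grading (the loop rotation) interacts with the $\mathop{GL}(n)$-torus so that the monomial weights land correctly in $\C P_\af$ before applying $\mathsf{pr}$; and that the product is taken in the correct order (ascending in the operators $\mathtt S_{i,\bullet}$ applied to $e^{\varLambda_n}$) so that associativity of the tower matches associativity of the product in $\Z\tSym$. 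A secondary, more technical point is to justify the passage from ``$\mathsf{IC}_\la$ and geometric Satake'' to ``monomial fixed-point weights'' compatibly: concretely, that $\mathsf{pr}$ of the equivariant cohomology of $\mathsf{IC}_\la$ on the relevant stratum of $\Gr$ indeed recovers $s_\la$ with the $\C^\times$-weight shift matching the $d_\Gamma$ in Theorem \ref{fthm:real}; this should follow from the standard description of $\mathsf{IC}_\la$ on the affine Grassmannian together with the $\mathsf m_\Gamma$-equivariance, but it needs to be stated precisely. Once the cell combinatorics of $\mathscr X_\Gamma$ is pinned down in the language of $\tSym$, the identification with $\mathtt S_{i,h_i}$ is a direct unwinding of the definition of $\mathtt S_{i,h}$ as a sum of initial subwords, and the theorem follows.
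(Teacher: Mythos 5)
Your overall frame --- count the torus fixed points of the tower $\mathscr X_\Psi$ stage by stage, organize them as the sums of initial subwords $\mathtt S_{i,h_i(\Psi)}$ applied to $e^{\varLambda_n}$, and compare with the Schur expansion coming from Theorem \ref{fthm:real} --- does match the paper's final counting step (its (\ref{eqn:refs})--(\ref{eqn:Tcount})). The genuine gap sits in the bridge you dismiss as ``secondary'': the identification of the coefficient of $X^{\gamma}$ in $X^{(n)}_\Gamma$ with the number of $\Gm^1$-fixed points of $\mathscr X_\Psi$ lying over $[\gamma]$. You cannot obtain this by Atiyah--Bott localization of ``the equivariant character of $H^{\bullet}(\mathscr X_\Gamma)$'': the $\mathop{GL}(n)$- (hence $T$-) action on $\bigoplus_\la V^\la_\Psi \otimes H^{\bullet}(\mathsf{IC}_\la)$ comes from geometric Satake and is \emph{not} induced by the geometric torus action on $\mathscr X_\Gamma$ (a connected group acts trivially on Betti cohomology), so no localization theorem outputs that character directly as a sum of monomial weights over fixed points. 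The paper instead works weight by weight: Lusztig's stalk formula $s_\la = \sum_{\gamma} X^{\gamma}\dim H^{\bullet}(\imath_\gamma^*\mathsf{IC}_\la)$ (Theorem \ref{thm:lusztig}) converts the Schur expansion of Theorem \ref{thm:main} into stalk dimensions of $(\mathsf{m}_\Psi)_*\C$ at the points $[\gamma]\in\Gr$, and proper base change converts these into $\dim H^{\bullet}(\mathsf{m}_\Psi^{-1}([\gamma]),\C)$, as in (\ref{eqn:Xtofib})--(\ref{eqn:bc}).

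Even granting that reduction, you still need $\dim H^{\bullet}(\mathsf{m}_\Psi^{-1}([\gamma]),\C) = |\mathsf{m}_\Psi^{-1}([\gamma])^{\Gm^1}|$, and here the appeal to a Bialynicki--Birula decomposition is insufficient: the paving of the smooth total space gives $\dim H^{\bullet}(\mathscr X_\Psi,\C)=|\mathscr X_\Psi^{\Gm^1}|$, but the individual fibers of $\mathsf{m}_\Psi$ are neither smooth nor visibly paved, so no cellular argument applies to them directly. The paper's actual argument is a counting one: localization gives only the inequality $|\mathsf{m}_\Psi^{-1}([\gamma])^{\Gm^1}| \ge \mathrm{rank}\, H^{\bullet}_{\Gm^1}(\mathsf{m}_\Psi^{-1}([\gamma]),\C)$ over $H^{\bullet}_{\Gm^1}(\mathrm{pt},\C)$ fiber by fiber; the parity vanishing of the IC stalks on $\Gr$ degenerates the spectral sequence (\ref{eqn:Leray}) so that this rank equals $\dim H^{\bullet}$ of the fiber; and summing over $\gamma$ against the total count forced by the affine paving turns every inequality into the equality (\ref{eqn:fibeq}). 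Without this (or a substitute), your step (2) yields only the total Betti number, not the per-$\gamma$ coefficients in (\ref{eqn:coeff}). By contrast, the bookkeeping you single out as the main obstacle (your step (3)) is comparatively routine in the paper: the $\Gm^1$-fixed points of the $i$-th stage are the translates of those of the $(i+1)$-st stage by the coset representatives $\{1,s_i,s_{i+1}s_i,\ldots,s_{h_i(\Psi)-1}\cdots s_i\}$, which is exactly the list of terms defining $\mathtt S_{i,h_i(\Psi)}$, and the fiber over $[\gamma]=w[\bv_{\varLambda_n}]$ is cut out by the condition $x_1\cdots x_n\in w\Sym_n$.
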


An algebraic explanation of Theorem \ref{fthm:char} is forthcoming \cite{Hik24b}. As a consequence of Theorem \ref{fthm:char}, we have:

\begin{fcor}[$\doteq$ Corollary \ref{cor:EP}, see e.g. \cite{DK22}]\label{fcor:EP}
For each unit interval graph $\Gamma$ with $n$-vertices, the Euler-Poincar\'e characteristic $\chi ( \mathscr X_\Gamma )$ of $\mathscr X_\Gamma$ is the number of proper colorings of $\Gamma$ with $n$-colors.
\end{fcor}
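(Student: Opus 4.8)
The plan is to deduce Corollary \ref{fcor:EP} from Theorem \ref{fthm:char} by a specialization/evaluation argument together with Stanley's classical formula for the chromatic symmetric function. Recall that the number of proper colorings of $\Gamma$ with $n$ colors is $\chi_\Gamma(n)$, where $\chi_\Gamma$ is the chromatic polynomial, and that Stanley's theorem \cite{Sta95} states that $X_\Gamma$ specializes to the chromatic polynomial: setting $X_1=\cdots=X_n=1$ (and all other variables to $0$) in $X_\Gamma$ yields $\chi_\Gamma(n)$. Since $X_\Gamma^{(n)}$ is precisely the image of $X_\Gamma$ under truncation to $n$ variables, and the terms of $X_\Gamma$ involving more than $n$ distinct variables vanish under this substitution anyway, we get
\[
\chi_\Gamma(n) \;=\; X_\Gamma^{(n)}\big|_{X_1=\cdots=X_n=1}.
\]
On the other side, the Euler--Poincar\'e characteristic of $\mathscr X_\Gamma$ can be read off from Theorem \ref{fthm:real}: applying $(\mathsf m_\Gamma)_*$ and taking total (signed) Betti numbers, and using that $\chi(\mathsf{IC}_\la)$ on $\mathbb O_\la$ together with the Satake identification gives $\dim H^\bullet(\mathsf{IC}_\la) = \dim$ of the Schur module, one sees that $\chi(\mathscr X_\Gamma) = \sum_\la (\dim s_\la\text{-module})\cdot \chi(V^\la(\mathscr X_\Gamma))$, i.e. exactly the value of the symmetric-function expression in Theorem \ref{fthm:real} at $t=1$ with all Schur functions evaluated at $X_1=\cdots=X_n=1$. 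Thus both quantities equal $X_\Gamma^{(n)}$ evaluated at the all-ones point, which will complete the proof.

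Concretely, first I would note that $d_\Gamma$ is irrelevant at $t=1$ (the prefactor $t^{-d_\Gamma}$ becomes $1$), so Theorem \ref{fthm:real} at $t=1$ gives $X_\Gamma^{(n)} = \sum_{\la\in\tP^+} s_\la^{(n)} \cdot \chi(V^\la(\mathscr X_\Gamma))$, where $s_\la^{(n)}$ is the Schur polynomial in $n$ variables. Second, I would evaluate at $X_1=\cdots=X_n=1$: the left side becomes $\chi_\Gamma(n)$ by Stanley's specialization, while $s_\la^{(n)}(1,\ldots,1) = \dim V^\la$ is the dimension of the irreducible $GL(n,\C)$-module of highest weight $\la$, which by geometric Satake equals $\dim H^\bullet(\mathsf{IC}_\la)$. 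Third, I would identify $\sum_\la \dim H^\bullet(\mathsf{IC}_\la)\cdot \chi(V^\la(\mathscr X_\Gamma))$ with $\chi(\mathscr X_\Gamma)$: since $(\mathsf m_\Gamma)_*\C_{\mathscr X_\Gamma} \cong \bigoplus_\la V^\la(\mathscr X_\Gamma)\boxtimes \mathsf{IC}_\la$ is an isomorphism in the derived category, taking hypercohomology (equivalently, pushing to a point) gives $H^\bullet(\mathscr X_\Gamma) \cong \bigoplus_\la H^\bullet(V^\la(\mathscr X_\Gamma))\otimes H^\bullet(\mathsf{IC}_\la)$, and the Euler characteristic of the left side is $\chi(\mathscr X_\Gamma)$ while that of the right side is $\sum_\la \chi(V^\la(\mathscr X_\Gamma))\cdot \chi(\mathsf{IC}_\la)$. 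The only subtlety here is that $\chi(\mathsf{IC}_\la)$ (as a complex on $\Gr$, hence with a shift) should be matched with $\dim H^\bullet(\mathsf{IC}_\la)$ up to the sign conventions built into the grading in Theorem \ref{fthm:real}; since Theorem \ref{fthm:real} already packages all the shifts into the stated even-degree formula $t^{i/2}\dim H^i$, setting $t=1$ automatically produces the unsigned count $\dim H^\bullet$, so the signs take care of themselves.

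The main obstacle, such as it is, is bookkeeping rather than mathematics: one must be careful that "Euler--Poincar\'e characteristic" in the statement means the topological Euler characteristic $\sum_i (-1)^i \dim H^i(\mathscr X_\Gamma;\C)$ and reconcile it with the fact that Theorem \ref{fthm:real} is phrased with a purity/parity normalization so that all relevant cohomology sits in even degrees (after the shift), making the Euler characteristic a genuine sum of Betti numbers. Assuming $\mathscr X_\Gamma$ is smooth projective and that the decomposition of Theorem \ref{fthm:real} is compatible with the standard conventions in \cite{MV07}, this is immediate. Alternatively — and this may be the cleanest route to include — one can bypass Theorem \ref{fthm:real} entirely and argue purely from Theorem \ref{fthm:char}: the operators $\mathtt S_{i,h}$ and the projection $\mathsf{pr}$ all commute with the evaluation $X_j\mapsto 1$, so one computes $X_\Gamma^{(n)}|_{X_j=1}$ directly from the right-hand side of Theorem \ref{fthm:char}, obtaining a positive integer, and then invokes the fact (e.g. \cite{DK22} as cited) that $\chi(\mathscr X_\Gamma)$ equals this same expression because $\mathscr X_\Gamma$ admits an affine paving / torus action whose fixed-point count is governed by the iterated operator formula. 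I would present the first route as the main proof and mention the second as a remark, since the first makes the appearance of "proper colorings" most transparent via Stanley's specialization.
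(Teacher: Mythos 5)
Your overall strategy---evaluate everything at $X_1=\cdots=X_n=1$, use Stanley's specialization on the combinatorial side and the decomposition $(\mathsf{m}_\Gamma)_*\C_{\mathscr X_\Gamma}\cong\bigoplus_\la V^\la(\mathscr X_\Gamma)\boxtimes\mathsf{IC}_\la$ on the geometric side, with $s_\la(1,\ldots,1)=\dim H^{\bullet}(\mathsf{IC}_\la)$ via Satake---is sound in outline, but there is a genuine gap at the one point you dismiss as ``bookkeeping.'' Setting $t=1$ in Theorem \ref{fthm:real} produces the \emph{unsigned} totals $\dim H^{\bullet}(V^\la)$, whereas pushing the decomposition to a point computes the \emph{signed} Euler characteristic $\chi(\mathscr X_\Gamma)=\sum_\la\chi(V^\la(\mathscr X_\Gamma))\cdot\chi(H^{\bullet}(\mathsf{IC}_\la))$. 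Equating the two is exactly the assertion $H^{\mathrm{odd}}(\mathscr X_\Gamma;\C)=0$, and your justification---``assuming $\mathscr X_\Gamma$ is smooth projective \ldots this is immediate''---is not a proof: smooth projectivity imposes no parity constraint at all (a positive-genus curve already has odd cohomology), and the $t^{i/2}$-grading in the statement of Theorem \ref{fthm:real} does not by itself rule out odd contributions on the Euler-characteristic side. The gap is fillable, but it requires an actual parity input: for instance, Theorem \ref{fthm:real} together with the fact that the chromatic quasisymmetric function has coefficients in $\Z_{\ge 0}[t]$ forces $H^{\mathrm{odd}}(V^\la(\mathscr X_\Gamma))=0$ (no cancellation is possible among nonnegative terms, and odd $i$ would create half-integral powers of $t$), and one must additionally invoke the evenness of $H^{\bullet}(\mathsf{IC}_\la)$ on $\Gr$ (Lusztig's parity; note $\dim\bO_\la$ is even for every $\la\in\tP^+_n$). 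None of this is automatic from ``standard conventions.''

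For comparison, the paper closes precisely this point by geometry rather than by normalization: via (\ref{eqn:coeff}) in Theorem \ref{thm:alg}, the coefficient of $X^\gamma$ in $X_\tth$ is the stalk dimension $\dim H^{\bullet}(\imath_\gamma^*(\mathsf{m}_\Psi)_*\C)$, i.e.\ a torus-fixed-point count, and summing over $\gamma$ gives $\dim H^{\bullet}(\mathscr X_\Psi,\C)$ by Theorem \ref{thm:lusztig}(2); then $\chi(\mathscr X_\Psi)=\dim H^{\bullet}(\mathscr X_\Psi,\C)$ is proved by exhibiting $\mathscr X_\Psi$ as an iterated fibration in projective spaces $\P^{\,n-j+h_j(\Psi)}$, which kills odd cohomology and simultaneously yields the closed product formula $\prod_{j=1}^n(n+1-j+h_j(\Psi))$. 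Your ``second route'' through Theorem \ref{fthm:char} and fixed-point counts is in fact the closer match to the paper's argument, but as written it too defers the decisive step (``governed by the iterated operator formula'') to a citation instead of an argument; if you take that route, you should actually use the affine paving or the projective-bundle tower to identify $\chi(\mathscr X_\Gamma)$ with the fixed-point count.
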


These analysis motivate us to propose the following:

\begin{fconj}[Geometric Stanley-Stembridge conjecture]\label{fconj:GSS}
For a proper smooth variety $\mathscr X$ equipped with a $G[\![z]\!]$-action and a $G[\![z]\!]$-equivariant map $f : \mathscr X \longrightarrow \Gr$, we decompose as:
$$f_* \C \cong \bigoplus_{\la \in \tP^+} V^{\la} (\mathscr X) \boxtimes \mathsf{IC}_{\la} \in D_{c}^b ( \Gr ), \hskip 5mm \text{where} \hskip 5mm V^{\la} (\mathscr X) \in D_{c}^b ( \mathrm{pt} ).$$
Then, the symmetric functions
$$\sum_{\la \in \tP^+} s_\la \cdot \dim \, H^i ( V^\la(\mathscr X) ) \in \La \hskip 10mm i \in \Z$$
expand positively with respect to the elementary symmetric functions.
\end{fconj}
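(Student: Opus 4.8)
\emph{Towards a proof.} The statement has two parts: the existence of the decomposition $f_*\C\cong\bigoplus_{\la\in\tP^+} V^\la(\mathscr X)\boxtimes\mathsf{IC}_\la$, and the claimed $e$-positivity. The first part is essentially formal. As $f$ is proper and $\mathscr X$ is smooth, $\C_{\mathscr X}[\dim\mathscr X]$ is perverse and $f_*\C$ is a pure complex (Deligne), so by the decomposition theorem \cite{BBDG} it is a direct sum of shifts of simple perverse sheaves; each constituent is $G[\![z]\!]$-equivariant since $f$ is, and the category of $G[\![z]\!]$-equivariant perverse sheaves on $\Gr$ is semisimple with simple objects exactly $\{\mathsf{IC}_\la\}_{\la\in\tP^+}$ (a standard fact from the Satake package: the orbits $\mathbb O_\la$ carry no relevant nontrivial equivariant local system, and there are no extensions between the $\mathsf{IC}_\la$). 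This pins down the graded vector spaces $V^\la(\mathscr X)$, and by geometric Satake \cite{MV07} the direct sum $\bigoplus_\la H^i(V^\la(\mathscr X))\otimes H^\bullet(\mathsf{IC}_\la)$ is a polynomial representation of $\mathop{GL}(n,\C)$ with character $F_i(\mathscr X):=\sum_\la s_\la\dim H^i(V^\la(\mathscr X))$. So it remains to prove that each $F_i(\mathscr X)$ is a nonnegative combination of monomials $e_\mu$.

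For this I would argue in two steps. \emph{Step A (unit interval graphs).} For $\mathscr X=\mathscr X_\Gamma$, Theorem \ref{fthm:real} identifies (after truncation) $\sum_i t^{i/2}F_i(\mathscr X_\Gamma)$ with $t^{d_\Gamma}X_\Gamma(t)$, so the conjecture here is precisely the graded $e$-positivity of the chromatic quasisymmetric function, i.e. the Shareshian--Wachs conjecture. I would attack this through the inductive operator formula of Theorem \ref{fthm:char}: realize each $\mathtt S_{i,h}$ as a sum of Littelmann-type path (or crystal) operators on $\C P_{\af}$ and show, by descending induction on the number of factors, that $(\mathtt S_{1,0}\cdots\mathtt S_{n,h_n})e^{\varLambda_n}$ is a nonnegative combination of ``$e$-monomials'' while tracking the $t$-degree --- the $t$-powers being precisely what lifts the (now proved \cite{Hik24a}) ungraded Stanley--Stembridge statement to the graded one. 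The inductive step should echo a known linear recursion for $X_\Gamma(t)$ (a Guay-Pacquet--type relation, or a modular law), and one concludes by applying $\mathsf{pr}$.

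\emph{Step B (general $\mathscr X$).} For arbitrary $G[\![z]\!]$-equivariant $(\mathscr X,f)$, the plan is to reduce to Step A by showing that, up to shift and Tate twist, $f_*\C$ is a direct \emph{sum} of pushforwards of constant sheaves from convolution spaces $\overline{\mathbb O_{(1^{k_1})}}\,\widetilde\times\cdots\widetilde\times\,\overline{\mathbb O_{(1^{k_r})}}$ built from minuscule affine Schubert varieties (each $\overline{\mathbb O_{(1^k)}}$ being a Grassmannian of $k$-planes in $\C^n$, hence smooth); since the $\mathsf{IC}$ of a minuscule orbit is constant, such a pushforward has Satake image $\bigotimes_j\bigwedge^{k_j}\C^n$ with character $\prod_j e_{k_j}$, manifestly $e$-positive. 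Concretely one would equip $\mathscr X$ with a $\C^\times$-action compatible with $f$ and use a Bia{\l}ynicki--Birula/weight decomposition to express $H^\bullet(\mathscr X)$ through fixed-point loci, each a ``smaller'' instance of the same problem, bottoming out at the $\mathscr X_\Gamma$ of \cite{Kat23a}, which one expects to be assembled from minuscule pieces in exactly this way. Should no such clean reduction exist, a fallback is a representation-theoretic route: endow $\bigoplus_\la V^\la(\mathscr X)\otimes H^\bullet(\mathsf{IC}_\la)$ with the module structure over the current/Kac--Moody-type algebra attached to $\mathscr X_\Psi$ in \cite{Kat23a} and show that its standard filtration has $e$-positive subquotients.

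The main obstacle is Step B. Step A is a known open problem, but at least the combinatorics of Theorem \ref{fthm:char} makes a concrete attack available; the risk there is that the operators $\mathtt S_{i,h}$ visibly produce Schur-positive but not visibly $e$-positive output, so that one still needs a genuinely new combinatorial identity (for instance a statistic on the relevant lattice paths matching the acyclic-orientation expansion of $X_\Gamma$). In Step B, by contrast, it is far from clear that every proper smooth $\mathscr X$ with a $G[\![z]\!]$-equivariant map to $\Gr$ is ``assembled from'' the $\mathscr X_\Gamma$, let alone from minuscule convolution spaces: being a direct summand of an $e$-positive object does \emph{not} imply $e$-positivity, so one needs an honest direct-sum decomposition, and isolating the correct closed class of building blocks --- together with the operations on them that preserve graded $e$-positivity --- is, I expect, the real heart of the matter.
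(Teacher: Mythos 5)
The statement you are addressing is Conjecture \ref{fconj:GSS}, which the paper itself leaves open: it is proposed as a conjecture precisely because no proof is known (the paper only observes that its validity for all $\mathscr X_\Psi$ would imply the Shareshian--Wachs conjecture). So there is no proof in the paper to measure your attempt against, and your text, as you yourself acknowledge, is a research plan rather than a proof. The only part you actually establish is the formal decomposition $f_*\C \cong \bigoplus_{\la} V^{\la}(\mathscr X) \boxtimes \mathsf{IC}_\la$, which is indeed routine (decomposition theorem plus semisimplicity of the Satake category and the absence of nontrivial equivariant local systems on the orbits $\mathbb O_\la$); this is consistent with how the paper treats that clause as part of the setup rather than as content.

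The genuine gaps are exactly where you locate them, and they are not closable by the methods you sketch. Step A, the case $\mathscr X = \mathscr X_\Gamma$, is by Theorem \ref{thm:main} literally the graded $e$-positivity of $X_\Gamma(t)$, i.e.\ the Shareshian--Wachs conjecture, which remains open even after Hikita's ungraded result; your proposal to expand $(\mathtt S_{1,0}\cdots \mathtt S_{n,h_n})e^{\varLambda_n}$ via path/crystal-type operators produces (as you note) only Schur-positive, not visibly $e$-positive, output, and no new identity or statistic is supplied. Step B has no argument at all: there is no reason offered that an arbitrary proper smooth $\mathscr X$ with a $G[\![z]\!]$-equivariant map to $\Gr$ admits an honest direct-sum comparison with pushforwards from minuscule convolution spaces or with the $\mathscr X_\Gamma$, and you correctly observe that being a direct summand of an $e$-positive object does not yield $e$-positivity, which undercuts the only reduction mechanism proposed. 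In short, the proposal is an honest and reasonable outline of how one might attack the conjecture, but it proves nothing beyond the formal decomposition, and it should not be presented as progress toward a proof of the positivity assertion itself.
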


The Shareshian-Wachs conjecture follows if Conjecture \ref{fconj:GSS} holds for all $\mathscr X_\Psi$. Hence, Conjecture \ref{fconj:GSS} offers a possible interpretation that the Stanley-Stembridge conjecture reflects some geometric structure of $\Gr$, (an algebro-geometric realization of) the classifying space of $U(n)$.

The organization of this paper is follows: We prepare basic material in section one that includes recollections (and slight combinatorial reformulations) of \cite{Kat23a}, a characterization of chromatic quasisymmetric functions \cite{AB21}, and the theory of character sheaves \cite{Lus85}. In section two, we first state our main results (Theorem \ref{fthm:real} and \ref{fthm:char}), provide examples and alternative proofs of well-known properties of chromatic quasisymmetric functions of unit interval graphs (Corollary \ref{cor:qs}), and then prove our main results.

\section{Preliminaries}\label{sec:prelim}
We always work over $\C$ unless otherwise specified. A variety is a separated normal scheme of finite type over $\C$, and we may not distinguish a variety $\sX$ with its set of $\C$-valued points. In particular, we have $\Gm = \C^{\times}$ and $\Ga = \C$. We set $[n]:=\{1, 2,\ldots, n\}$. For a complex $( C^{\bullet}, d )$ of $\C$-vector space, we set
$$\dim \, H^{\bullet}( C^{\bullet}, d ) := \sum_{i \in \Z} \dim H^{i}( C^{\bullet}, d ) \hskip 5mm \text{and} \hskip 5mm \gdim \, H^{\bullet}( C^{\bullet}, d ) := \sum_{i \in \Z} t^{i/2} \dim H^{i}( C^{\bullet}, d ).$$
We set $[n]_t:=\frac{1-t^n}{1-t}$ and $[n]_t! := [n]_t[n-1]_t\cdots [1]_t$ for each $n \in \Z_{\ge 0}$.

\subsection{Algebraic Groups}
Here conventions are largely in common with \cite{Kat23a}, see also \cite{Kum02,CG97}. Fix $n \in \Z_{>0}$ and consider
$$G = \mathop{GL} ( n ) \subset M_n \cong \C^{n^2} \hskip 3mm \text{and} \hskip 3mm \bG = \mathop{GL} (n, \C [\![z]\!]) \subset G(\!(z)\!) := \mathop{GL} (n, \C (\!(z)\!)).$$
Let $E_{ij} \in M_n$ ($1 \le i,j \le n$) be the matrix unit. We have an extra loop rotation $\Gm$-action on $G(\!(z)\!)$, and the group $\Gm \times G(\!(z)\!)$ admits a central extension $\widetilde{G}(\!(z)\!)$ by $\Gm$. We denote by $\Gm^{\ro}$ and $\Gm^{\ce}$ the subtori of $\widetilde{G}(\!(z)\!)$ corresponding to the loop rotation and the center. Let $T \subset G$ be the diagonal torus and let $B \subset G$ be the upper triangular part of $G$. We have the evaluation map
$$\mathtt{ev}_0 : \tbG \longrightarrow G \hskip 10mm \Gm^{\ro} \mapsto \{e\}, \hskip 3mm z \mapsto 0.$$
We set $\tbB := \mathtt{ev}_0^{-1} ( B )$. For each $1 \le i < n$, we set $P_i \subset G$ (resp. $\tbP_i \subset \bG$) as the (algebraic or pro-algebraic) subgroup generated by $B$ (resp. $\tbB$) and $\mathrm{Id} + \C E_{i+1,i}$ inside $G$ (resp. $\tbG$). We set $\tbP_0$ as the (pro-)algebraic group generated by $\tbB$ and $\mathrm{Id} + \C z^{-1} E_{1,n}$ inside $\widetilde{G}(\!(z)\!)$. We define the extended torus as $\tT := T \times \Gm^\ro \times \Gm^\ce \subset \tbG$. We have $\tbP_i \cap \tbP_j = \tbB$ when $i \neq j$.

For $1 \le i \le n$, we have a character $\varepsilon_i : T \rightarrow \Gm$ that extracts the $i$-th entry of $T$. We set
$$\sP := \bigoplus_{i=1}^n \Z \varepsilon_i$$
and consider its subsets
$$\tP := \sum_{i=1} \Z_{\ge 0} \varepsilon_i \hskip 2mm\text{and} \hskip 2mm \sP^+ := \{\sum_{i=1}^n \la_i \varepsilon_i \in \sP \mid \la_1 \ge \la_2 \ge \cdots \ge \la_n \}.$$
For $\la = \sum_{i=1}^n \la_i \varepsilon_i \in \sP$, we set $|\la| := \sum_{i=1}^n \la_i \in \Z$. The permutation of indices define $\mathfrak S_n$-actions on $\sP$ and $\tP$. We set $\tP^+ := ( \sP^+ \cap \tP )$ and identify it with the set of partitions with its length $\le n$. We set
$$\tP_{n} := \{\la \in \tP \mid |\la|=n\} \hskip 5mm \text{and} \hskip 5mm \tP_{n}^+:=\tP_n \cap \tP^+.$$
We may regard $\varepsilon_i$ as a character of $\widetilde{T}$ through the projection to $T$. Let $\varLambda_0$ and $\delta$ denote the degree one characters of $\Gm^\ce$ and $\Gm^\ro$ extended to $\widetilde{T}$ trivially, respectively. We set $\varpi_n := \sum_{j=1}^n \varepsilon_j$ and
$$\varLambda_i := \varLambda_0 + \sum_{j=1}^i \varepsilon_j \hskip 3mm (1 \le i < n), \hskip 5mm \varLambda_0 + \varpi_n \hskip 3mm (i = n).$$
We frequently identify the index $0$ with $n$ in the sequel. We set
$$\sP_\af := \Z \delta \oplus \Z \varLambda_0 \oplus \bigoplus_{i \in [n]} \Z \varepsilon_i  \hskip 3mm \text{and} \hskip 3mm \sP^+_\af :=  \Z \varpi_n \oplus \Z \delta \oplus \sum_{i \in [n]} \Z_{\ge 0} \varLambda_i \subset \sP_\af.$$
We set $\al_{i,j}:= \varepsilon_i - \varepsilon_j$ for $1 \le i < j \le n$ and $\Delta^+ := \{ \al_{i,j} \}_{1 \le i < j \le n} \subset \sP$. We put $\al_i := \al_{i,i+1}$ for $1 \le i < n$ and $\al_0 := - \al_{1,n} + \delta$. For $\la,\mu \in \tP_n^+$, we set $\la \unrhd \mu$ if $\la \in \mu + \sum_{\al \in \Delta^+} \Z_{\ge 0} \al$. We define an inner form on $\sP_\af$ as $\left< \varepsilon_i, \varepsilon_j \right> = \delta_{ij}$ and $\varLambda_0, \delta \in \mathrm{Rad} \, \left< \bullet, \bullet \right>$. For $\al = \al_{i,j} \in \Delta^+$, we set $\g_\al := \C E_{ij} \subset \gn := \mathrm{Lie} \, [B,B] \subset M_n$.

The affine Weyl group $\tSym_n$ of $\mathsf{A}_{n-1}$ is generated by $\{s_i\}_{i \in [n]}$. We have an action of $\tSym_n$ on $\sP_\af$ as:
$$s_i ( \varLambda ) := \varLambda - ( \left< \al_i, \varLambda \right> + \delta_{in} \varLambda ( K ) )\al_i \hskip 5mm i \in [n], \varLambda \in \sP_\af$$
where $K \in \mathrm{Hom} ( \sP_\af, \Z )$ satisfies $\varpi_i ( K ) = 0$ $(i \in \tI_\af)$, $\delta ( K ) = 0$, and $\varLambda_0 ( K ) = 1$.

For each $i \in ([n]\cup\{0\})$, let $L ( \varLambda _i )$ be the integrable highest weight module of an affine Lie algebra of $\mathfrak{gl} ( n,\C )$ (on which $G(\!(z)\!) \times \Gm$ acts by \cite[Chap. V\!I]{Kum02}) with a fixed $\tbB$-eigenvector $\bv_{\varLambda_i}$.

\subsection{Combinatorial setup}

\begin{defn}[Hessenberg function]
Let $n \in \Z_{>0}$. A Hessenberg function $\tth:[n]\to [n]$ (of size $n$) is a map satisfying $\tth(1)\le \tth(2)\le \cdots \le \tth(n-1) \le \tth(n)=n$ and $\tth(i) \ge i$ for all $i \in [n]$. Let $\mathbb H_n$ be the set of Hessenberg functions of size $n$, and we set $\mathbb H := \bigsqcup_{n > 0} \mathbb H_n$.
\end{defn}
\begin{defn}[Unit interval graph]
For a Hessenberg function $\tth \in \mathbb H_n$, let $\Gamma_\tth$ denote the graph whose vertex set is $[n]$, and we have an edge joining $i$ and $j$ $(i < j)$ if and only if $i < j \le f(i)$. Such a graph is called a unit interval graph.
\end{defn}

\begin{defn}[Root ideals]
A subset $\Psi \subset \Delta^+$ is called a root ideal if and only if $( \Psi + \Delta^+ ) \cap \Delta^+ \subset \Psi$. It is easy to find that $\Psi$ is a root ideal if and only if $\al_{i,j}\in \Psi$ implies $\al_{i',j}, \al_{i,j'} \in \Psi$ for $i' < i$ and $j < j'$. For a root ideal $\Psi \subset \Delta^+$, we define
$$\gn ( \Psi ) : = \bigoplus _{\al \in \Psi} \g_\al \subset \gn.$$
For a root ideal $\Psi \subset \Delta^+$ and $1 \le i \le n$, we define $$h_i (\Psi) := \max \left( \{ j\mid \al_{ji} \in \Psi \} \cup \{0\}\right).$$
\end{defn}

\begin{lem}[Cellini \cite{Cel00} \S 3]\label{lem:rootid}
For a root ideal $\Psi \subset \Delta^+$, the subspace $\gn ( \Psi ) \subset \gn$ is $B$-stable.\hfill $\Box$
\end{lem}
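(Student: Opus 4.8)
The plan is to deduce the $B$-stability of $\gn ( \Psi )$ from two ingredients: the obvious $T$-stability of a sum of root spaces, and a single structure-constant computation in $\gn$ matched against the closure property defining a root ideal. First I would write $B = T \ltimes N$ with $N := [B,B]$ the unipotent radical, so that $\gn = \mathrm{Lie}\, N$ and $B$ acts on $\gn$ by the adjoint action. Since $\gn ( \Psi ) = \bigoplus_{\al \in \Psi} \g_\al$ is a sum of $T$-eigenspaces it is automatically $T$-stable, so it remains to prove that $N$ preserves $\gn ( \Psi )$. As $N$ is connected unipotent over $\C$, generated by the root subgroups $U_\beta = \{ \mathrm{Id} + c E_{k\ell} \mid c \in \C \}$ for $\beta = \al_{k,\ell} \in \Delta^+$, it suffices to check that each $U_\beta$ preserves $\gn ( \Psi )$; equivalently --- by passing to the Lie algebra, using $\mathrm{char}\, \C = 0$ --- it suffices to prove $[\gn, \gn ( \Psi )] \subseteq \gn ( \Psi )$.

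For the computation, fix $\beta = \al_{k,\ell} \in \Delta^+$ and $\al = \al_{i,j} \in \Psi$. From $[E_{k\ell}, E_{ij}] = \delta_{\ell i} E_{kj} - \delta_{jk} E_{i\ell}$ one reads off the familiar type-$\mathsf A$ fact: $[\g_\beta, \g_\al] \subseteq \g_{\al + \beta}$ when $\al + \beta \in \Delta^+$ (the bracket is $E_{kj}$ in the case $\ell = i$, where then $k < i < j$, and $-E_{i\ell}$ in the case $j = k$, where then $i < k < \ell$), while $[\g_\beta, \g_\al] = 0$ otherwise (both Kronecker deltas vanish; note that $\ell = i$ and $j = k$ cannot hold simultaneously, as that would force $k < \ell = i < j = k$). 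Now the defining property of a root ideal, $( \Psi + \Delta^+ ) \cap \Delta^+ \subseteq \Psi$, says precisely that $\al + \beta \in \Delta^+$ implies $\al + \beta \in \Psi$, hence $\g_{\al + \beta} \subseteq \gn ( \Psi )$; and if $\al + \beta \notin \Delta^+$ there is nothing to prove. Summing over $\beta \in \Delta^+$ and $\al \in \Psi$ yields $[\gn, \gn ( \Psi )] \subseteq \gn ( \Psi )$, so $\gn ( \Psi )$ is stable under $N$ and under $T$, hence under $B$; equivalently it is a $\gb$-submodule --- in fact an ideal --- of $\gn$. In the matrix-unit description this is exactly the implication ``$\al_{i,j} \in \Psi \Rightarrow \al_{k,j}, \al_{i,\ell} \in \Psi$ whenever $k < i$ and $j < \ell$'' recorded in the Definition, applied with the index $k$ coming from $\beta$ in the first case and with $\ell$ in the second.

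I expect no genuine obstacle here: the whole argument is the elementary bracket identity for matrix units plus the bookkeeping that identifies ``closure of $\Psi$ under adding positive roots'' with ``closure under decreasing the first index or increasing the second index'', which are the two equivalent descriptions of a root ideal given above. The only step that deserves an explicit word is the reduction from $U_\beta$-stability to $\gn$-stability, which relies on $N$ being connected and on the characteristic being zero; alternatively one sidesteps the Lie algebra altogether by computing the conjugation $( \mathrm{Id} + c E_{k\ell} ) E_{ij} ( \mathrm{Id} - c E_{k\ell} ) = E_{ij} + c [E_{k\ell}, E_{ij}]$ directly --- the quadratic term $c^2 E_{k\ell} E_{ij} E_{k\ell}$ vanishing for the same index reason noted above --- which is equally routine.
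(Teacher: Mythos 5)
Your proof is correct: the matrix-unit bracket computation together with the closure property $(\Psi+\Delta^+)\cap\Delta^+\subseteq\Psi$ (equivalently, stability under decreasing $i$ or increasing $j$) is exactly the standard argument behind this lemma, which the paper itself leaves to the cited reference (Cellini) rather than proving. Your reduction from $U_\beta$-stability to $\mathrm{ad}$-stability, and the alternative direct conjugation check with the vanishing quadratic term, are both sound, so nothing is missing.
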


The following result is modified from the original by changing $\gn ( \Psi )$ with $\gn(\Psi)^{\perp}$, where $\perp$ is taken with respect to the invariant bilinear form on $\mathfrak{gl}(n)$:

\begin{prop}[De Mari-Procesi-Shayman \cite{MPS92} Lemma 1]
For a Hessenberg function $\tth$, we define a root ideal $\Psi_\tth$ as $\al_{i,j} \in \Delta^+ \setminus \Psi_\tth$ if and only if $i < j \le \tth(i)$. For a root ideal $\Psi \subset \Delta^+$, we define the Hessenberg function $\tth_\Psi$ as $\tth_{\Psi}(i) := \max \bigl( \{j \mid \al_{i,j} \not\in \Psi \} \cup \{ i \} \bigr)$. This sets up a bijection between the Hessenberg functions and root ideals. \hfill $\Box$
\end{prop}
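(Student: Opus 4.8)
The plan is to check by hand that the two assignments $\tth \mapsto \Psi_\tth$ and $\Psi \mapsto \tth_\Psi$ are well defined and mutually inverse. The one conceptual point to fix first is the partial order on $\Delta^+$: declare $\al_{i,j} \le \al_{i',j'}$ iff $i' \le i$ and $j' \ge j$, equivalently iff $\al_{i',j'} - \al_{i,j} = \al_{i'} + \cdots + \al_{i-1} + \al_j + \cdots + \al_{j'-1}$ is a non-negative sum of simple roots. With this order, the defining property of a root ideal --- ``$\al_{i,j}\in \Psi$ implies $\al_{i',j}, \al_{i,j'} \in \Psi$ for $i' < i$, $j < j'$'' --- says precisely that $\Psi$ is an up-set, so $\Delta^+ \setminus \Psi$ is a down-set.

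First I would verify well-definedness. For a Hessenberg function $\tth$, the complement $\Delta^+ \setminus \Psi_\tth = \{\al_{i,j} : i < j \le \tth(i)\}$ is a down-set: if $i < j \le \tth(i)$ and $i \le i' < j' \le j$, then $\tth(i') \ge \tth(i) \ge j \ge j'$ by monotonicity of $\tth$, so $\al_{i',j'}$ again lies in the complement; hence $\Psi_\tth$ is a root ideal. For a root ideal $\Psi$, the function $\tth_\Psi$ is Hessenberg: $\tth_\Psi(i) \ge i$ is immediate; $\tth_\Psi(n) = n$ because $\Delta^+$ contains no $\al_{n,j}$, so the maximum is over $\{n\}$; and for $i < i'$, setting $m := \tth_\Psi(i)$, either $m \le i'$ (so $\tth_\Psi(i) \le i' \le \tth_\Psi(i')$) or $m > i'$, in which case $\al_{i,m} \notin \Psi$ and $\al_{i',m} \le \al_{i,m}$ force $\al_{i',m} \notin \Psi$ (the complement being a down-set), giving $\tth_\Psi(i') \ge m$.

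Next I would check the two round trips. For $\tth_{\Psi_\tth} = \tth$: the set $\{j : \al_{i,j} \notin \Psi_\tth\}$ is exactly $\{j : i < j \le \tth(i)\}$, whose maximum is $\tth(i)$ when $\tth(i) > i$ and which is empty when $\tth(i) = i$; since $\tth(i) \ge i$, in both cases $\max(\{j : \al_{i,j} \notin \Psi_\tth\} \cup \{i\}) = \tth(i)$. For $\Psi_{\tth_\Psi} = \Psi$ it suffices to prove, for $\al_{i,j} \in \Delta^+$, the equivalence $\al_{i,j} \notin \Psi \iff j \le \tth_\Psi(i)$. The forward implication holds because $j$ then belongs to the set over which $\tth_\Psi(i)$ is the maximum. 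For the converse, if $i < j \le \tth_\Psi(i) =: m$ then $m > i$, so $\al_{i,m} \notin \Psi$ by definition of $\tth_\Psi(i)$; since $\al_{i,j} \le \al_{i,m}$ and $\Psi$ is an up-set, $\al_{i,j} \notin \Psi$.

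I do not expect any real obstacle here: the argument is pure bookkeeping. The only things demanding a little care are matching conventions --- that ``root ideal'' means up-set for the root order and that the bijection is realized by the complement/co-ideal correspondence --- and handling the degenerate cases $\tth(i) = i$ (empty index set) and $i = n$ (no roots at all) when reading the formula for $\tth_\Psi$.
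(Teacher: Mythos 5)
Your verification is correct and complete: the up-set/down-set reformulation matches the paper's stated characterization of root ideals, the degenerate cases $\tth(i)=i$ and $i=n$ are handled, and the two round-trip computations do establish the bijection. The paper itself offers no proof, citing De Mari--Procesi--Shayman, and your elementary bookkeeping argument is exactly the standard one, so there is no divergence to report.
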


We have an involution on the set of set of root ideals induced by an endomorphism $\theta$ of $\Delta^+$ given as: $\al_{ij} \mapsto \al_{n+1-j,n+1-i}$ ($1 \le i < j \le n$).

By construction, we have $\tth_\Psi ( n+1-i ) + h_i ( \theta ( \Psi ) ) = n$ for each  $1 \le i \le n$.

\subsection{Chromatic symmetric functions and the modular law}
For generalities on symmetric functions, see \cite{Mac95}. Let $\La$ be the space of symmetric functions over $\C(t^{1/2})$, and let $\Par$ be the set of all partitions. Let $\{m_\la\}_{\la \in \Par}$, $\{s_\la\}_{\la \in \Par}$, and $\{e_\la\}_{\la \in \Par}$ denote the $\C(t^{1/2})$-bases of $\La$ offered by monomial symmetric functions, Schur functions, and elementary symmetric functions, respectively. We have a truncation map
$$\pi_n : \La \longrightarrow \La^{(n)} := \C(t^{1/2}) [X]^{\Sym_n} \subset \C(t^{1/2}) [X] = \C(t^{1/2}) [X_1,\ldots,X_n]$$
by setting $0 = X_{n+1} = X_{n+2} = \cdots$. The images of $m_\la$, $s_\la$, and $e_\la$ in $\La^{(n)}$ are $0$ if and only if $\la_{n+1} \neq 0$, and the non-zero images of $\{m_\la\}_{\la \in \Par}$, $\{s_\la\}_{\la \in \Par}$, and $\{e_\la\}_{\la \in \Par}$ in $\La^{(n)}$ define bases of $\La^{(n)}$, that we denote by the same letter for brevity. For each $\gamma = \{ \gamma_i \}_i \in \tP^+$, we set $X^\gamma := X_1^{\gamma_1} \cdots X_n^{\gamma_n} \in \C(t^{1/2})[X]$.

\begin{thm}[Abreu-Nigro \cite{AB21} Theorem 1.1, cf. \cite{AS20} (12)]\label{thm:AN}
The chromatic symmetric function $X_{\bullet}(t) :\mathbb H \to \La$ is a unique function with the following properties:
\begin{enumerate}
\item $(\mathrm{Modular\ law})$ Assume that $\tth_0,\tth,\tth_1 \in \mathbb H_n$ correspond to the root ideals $\Psi_0 \subsetneq \Psi \subsetneq \Psi_1 \subset \Delta^+$, respectively. Let $1 \le i < n$ and suppose that $\Psi_0,\Psi_1$ are $s_i$-stable. We have
$$(1+t) X_\tth(t) = X_{\tth_0}(t) + t X_{\tth_1}(t)$$
if $|\Psi_0| + 1 = |\Psi| = | \Psi_1 | - 1$ holds, and either of the following two conditions are satisfied:
\begin{itemize}
\item We have $\al_{i,j} \in (\Psi \setminus \Psi_0)$, $\al_{(i+1),j} \in (\Psi_1 \setminus \Psi)$ for some $i < j < n$; 
\item We have $\al_{j,i} \in (\Psi \setminus \Psi_0)$, $\al_{j,(i+1)} \in (\Psi_1 \setminus \Psi)$ for some $j < i < n$;
\end{itemize}
\item $(\mathrm{Factorization\ law})$ If $\Gamma_\tth = \Gamma_{\tth_0} \sqcup \Gamma_{\tth_1}$ for $\tth,\tth_0,\tth_1 \in \mathbb H$, then $X_\tth (t) = X_{\tth_0} (t) \cdot X_{\tth_1} (t)$;
\item If $\Gamma_\tth$ is a complete graph of $n$-vertices, then we have $X_\tth (t) = [n]_t! \cdot e_n$.
\end{enumerate}
\end{thm}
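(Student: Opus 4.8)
The plan is to prove the two separate assertions packaged in the statement: that the Shareshian--Wachs chromatic symmetric function $X_\bullet(t)\colon\mathbb H\to\La$ satisfies (1)--(3) (\emph{existence}), and that any function $\mathbb H\to\La$ satisfying (1)--(3) is unique (\emph{uniqueness}). For existence, write $X_{\Gamma_\tth}(t)=\sum_{\kappa}t^{\mathrm{asc}(\kappa)}x_\kappa$, the sum over proper colorings $\kappa\colon[n]\to\Z_{>0}$ of $\Gamma_\tth$, with $\mathrm{asc}(\kappa)$ the number of edges $\{i,j\}$ ($i<j$) on which $\kappa$ is increasing. Property (3) is the evaluation $X_{K_n}(t)=\sum_{|S|=n}\bigl(\sum_{w\in\Sym_S}t^{\mathrm{inv}(w)}\bigr)x_S=[n]_t!\,e_n$, using that proper colorings of $K_n$ are injections together with the Mahonian identity $\sum_{w\in\Sym_n}t^{\mathrm{inv}(w)}=[n]_t!$. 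Property (2) is immediate: a proper coloring of $\Gamma_{\tth_0}\sqcup\Gamma_{\tth_1}$ is a pair of proper colorings, $\mathrm{asc}$ is additive over the two blocks, and hence the generating functions multiply. Property (1), the modular law, is the substantive point: for a modular triple $\Psi_0\subsetneq\Psi\subsetneq\Psi_1$ as in the hypotheses, the subgraphs $\Gamma_{\tth_0}\supseteq\Gamma_\tth\supseteq\Gamma_{\tth_1}$ on $[n]$ differ only in the two edges $\{i,j\}$ and $\{i+1,j\}$ (or their mirror images), and $s_i$-stability of $\Psi_0$ and $\Psi_1$ makes $i$ and $i+1$ twins outside of $j$ in $\Gamma_{\tth_0}$ and $\Gamma_{\tth_1}$; matching the proper colorings of the three graphs and cancelling by toggling the colors at $i$ and $i+1$ yields $(1+t)X_\tth(t)=X_{\tth_0}(t)+tX_{\tth_1}(t)$. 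This is the modular relation of \cite{AB21} (cf.\ \cite{AS20}), which one may alternatively simply quote.

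For uniqueness I would induct on $n$. If $\Gamma_\tth$ is disconnected --- equivalently $\tth(k)=k$ for some $k<n$ --- then $\Gamma_\tth$ splits as a disjoint union of two unit interval graphs on fewer vertices, so (2) and the inductive hypothesis determine $X_\tth(t)$; in particular all disjoint unions of complete graphs are pinned down. If $\Gamma_\tth$ is connected and complete, $X_\tth(t)$ is given by (3). For $\Gamma_\tth$ connected and non-complete (so $\emptyset\subsetneq\Psi_\tth\subsetneq\Delta^+$) I would run a secondary induction along a well-founded order $\preceq$ on $\mathbb H_n$ --- one refining the edge count $|\Delta^+|-|\Psi_\tth|$, or lexicographic in $(\tth(1),\dots,\tth(n))$ --- for which complete and disconnected graphs occur only as base cases. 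The inductive step is the claim that every connected non-complete $\tth$ sits inside a valid modular triple of (1) in a \emph{solvable position}: either as the $\preceq$-maximal member of the triple, or as the middle member $\Psi$ whose largest member $\Psi_1$ is disconnected. Solving the modular relation for $X_\tth(t)$ then expresses it through values at indices that are base cases (complete or disconnected) or strictly $\preceq$-smaller, hence already determined. Such a triple is produced by locating an appropriate ``corner'' of the root ideal $\Psi_\tth$ --- an index $i<n$ and a root $\al$ such that deleting $\al$ from and adjoining $s_i(\al)$ to (or conversely) $\Psi_\tth$ produces an $s_i$-stable pair $\Psi_0\subsetneq\Psi_1$ bracketing $\Psi_\tth$ exactly as in one of the two bulleted cases of (1). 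Since $X_\bullet(t)$ is itself a solution, this reduction computes the value at $\tth$ of any solution from strictly smaller (or base) indices, so two solutions agree everywhere.

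The main obstacle is precisely this combinatorial lemma at the heart of the uniqueness argument: that \emph{every} connected non-complete unit interval graph admits a legal modular triple placing it in a solvable position relative to a fixed well-founded order on $\mathbb H_n$, and that the induced reduction terminates. This is where the fine structure of root ideals and the $s_i$-stability constraints have to be analysed, and it is the one step demanding genuine care; once the correct order on $\mathbb H_n$ is identified, the remaining verifications are routine bookkeeping. The bijective argument underlying the modular relation needed for existence, though also nontrivial, is by contrast classical and available in the cited literature.
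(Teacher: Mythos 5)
The paper does not actually prove Theorem \ref{thm:AN}: it is imported as a black box from Abreu--Nigro \cite{AB21} (cf.\ \cite{AS20}) and then used as the characterization against which $X'_\tth(t)$ is checked in the proof of Theorem \ref{thm:main}, so there is no in-paper argument to compare yours with. Judged on its own terms, your outline reproduces the standard strategy of \cite{AB21} --- existence by verifying (1)--(3) for the Shareshian--Wachs function, uniqueness by an induction that uses the modular law to reduce every connected non-complete Hessenberg function to complete graphs and disjoint unions --- but it stops short of a proof at exactly the two places where the content of the theorem lies.

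For existence, the modular law (1) is not established by the color-toggling sketch: exchanging the colors of $i$ and $i+1$ changes the ascent statistic on every edge incident to these two vertices, and producing the identity $(1+t)X_\tth(t)=X_{\tth_0}(t)+tX_{\tth_1}(t)$ requires a weight-preserving matching of colorings of the three graphs in which the factors $1$, $t$, $(1+t)$ come out correctly; this is the nontrivial local relation of \cite{AB21} (at $t=1$ it goes back to \cite{GP13}), and in the end you simply quote it --- which is circular if the goal is to prove the quoted theorem. More seriously, the core of the uniqueness half --- that every connected, non-complete $\tth\in\mathbb H_n$ sits in a legal modular triple in a ``solvable position'' relative to a well-founded order on $\mathbb H_n$, so that the recursion terminates in the base cases handled by (2) and (3) --- is stated as a claim and explicitly flagged by you as the unresolved obstacle. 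That claim is precisely the substance of \cite[Theorem 1.1]{AB21}: one must exhibit the order (in \cite{AB21} this is done via a statistic on the associated Dyck paths) and, for each non-base $\tth$, construct the $s_i$-stable pair $\Psi_0\subsetneq\Psi_1$ bracketing $\Psi_\tth$ as in one of the two bulleted cases, with the other two members of the triple strictly smaller or of base type. Without this construction the induction does not close, so what you have is an accurate road map of the known proof rather than a proof.
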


\subsection{Geometry of $\mathscr X_\Psi$}

We set $X := G / B$ and call it the flag manifold of $G$. We define a $G$-equivariant vector subbundle
$$T_{\Psi}^* X := G \times^B \gn ( \Psi ) \subset G \times^B \gn \cong T^* X \hskip 10mm \text{for a root ideal} \hskip 3mm \Psi \subset \Delta^+.$$
Let $\pi_\Psi: T^*_\Psi X \to X$ be the projection map. By convention, if $\Psi$ is replaced with a root ideal of $\mathop{GL}(m)$, then we understand that $G=\mathop{GL}(m)$ and $X$ is the flag variety of $\mathop{GL}(m)$.

We fix a root ideal $\Psi \subset \Delta^+$, and consider pro-algebraic groups
$$\tbG_i := \left< \tbP_j \mid i \le j < n \text{ or } 0 \le j < h_i (\Psi)\right>\subset \widetilde{G}(\!(z)\!) \hskip 3mm 1 \le i \le n$$
and $\tbG_{n+1} := \tbG$. Note that each $\tbG_i$ contains $\tbB$, and the quotient $\tbG_i / \tbB$ is finite-dimensional since $h_i ( \Psi ) < i$.

Then, we define $\mathscr X_\Psi ^{(n+1)} := \mathrm{pt}$ and define $\mathscr X_{\Psi}^{(j)}$ ($1 \le j \le n$) by downward induction as follows:
\begin{equation}
\mathscr X^{(j)}_\Psi := \tbG_j ( [\bv_{\varLambda_j}] \times \mathscr X^{(j+1)}_\Psi ) \hookrightarrow \P ( L ( \varLambda_j ) ) \times \prod_{k=j+1}^n \P ( L ( \varLambda_k ) ).\label{eqn:ind-def}
\end{equation}
We set $\mathscr X_\Psi:= \mathscr X^{(1)}_\Psi$. By (\ref{eqn:ind-def}) and $\tbG_1 = \tbG$, we have a $\tbG$-equivariant proper morphism
$$\mathsf{m}_\Psi : \mathscr X_\Psi \longrightarrow \P ( L (  \varLambda_n  ))) \cong \P ( L (  \varLambda_0  ))$$
that lands on the affine Grassmannian $\Gr$ of $G$.

\begin{thm}[Kato \cite{Kat23a} Theorem 3.10, Corollary 4.10, and Corollary 3.12]\label{thm:X-Psi}
Let $\mathcal N \subset M_n$ be the nilpotent cone. For a root ideal $\Psi \subset \Delta^+$, it holds:
\begin{enumerate}
\item We have a $(G \times \Gm)$-equivariant open embedding $T^*_\Psi X \subset \mathscr X_\Psi$. This induces an embedding $\mathscr X_{\Psi} \subset \mathscr X_{\Psi'}$ whenever $\Psi \subset \Psi' \subset \Delta^+$;
\item We have a commutative diagram:
$
\xymatrix{
T^*_\Psi X \ar[r]^{\mu_\Psi} \ar@{^{(}->}[d] & \mathcal N \ar@{^{(}->}[d]&\\
\mathscr X_\Psi \ar[r]^{\mathsf{m}_\Psi} & \Gr \ar@{^{(}->}[r]& \P ( L ( \varLambda_n ) )
}.
$
\end{enumerate}
\end{thm}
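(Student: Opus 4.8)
The plan is to analyse both assertions along the tower (\ref{eqn:ind-def}), first isolating its structure. Since $h_j(\Psi)<j$, the node set $\{j,\dots,n-1\}\cup\{0,\dots,h_j(\Psi)-1\}$ defining $\tbG_j$ is a proper cyclic subinterval of $\{0,\dots,n-1\}$, so $\tbG_j$ is a standard parabolic of $\widetilde G(\!(z)\!)$ of finite type; a short computation with the displayed $\tSym_n$-action (including the level term at the node $n$) shows that $\varLambda_j$ is fixed by $s_i$ for all $i\neq j$, whence $\tbG_j\cdot[\bv_{\varLambda_j}]\cong\P^{d_j}$ with $d_j:=(n-j)+h_j(\Psi)$, and the stabiliser $\bQ_j$ of $[\bv_{\varLambda_j}]$ in $\tbG_j$ is the standard parabolic with node set $\{j+1,\dots,n-1\}\cup\{0,\dots,h_j(\Psi)-1\}$, which --- using $h_j(\Psi)\le h_{j+1}(\Psi)$ --- equals $\tbG_j\cap\tbG_{j+1}$. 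Because the $\tbG_j$-stabiliser of any point $[\bv_{\varLambda_j}]\times x$ is $\bQ_j\cap\mathrm{Stab}_{\tbG_j}(x)\subseteq\bQ_j$, the tautological morphism $[g,x]\mapsto g\cdot([\bv_{\varLambda_j}]\times x)$ from $\tbG_j\times^{\bQ_j}\mathscr X^{(j+1)}_\Psi$ to $\mathscr X^{(j)}_\Psi$ is proper, bijective and unramified, hence a closed immersion onto a reduced scheme, hence an isomorphism. So $\mathscr X^{(j)}_\Psi$ is Zariski-locally $\P^{d_j}\times\mathscr X^{(j+1)}_\Psi$, and by descending induction $\mathscr X_\Psi$ is smooth, projective and irreducible. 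Moreover $\Psi\subseteq\Psi'$ forces $h_j(\Psi)\le h_j(\Psi')$, hence $\tbG_j(\Psi)\subseteq\tbG_j(\Psi')$ and $\bQ_j(\Psi)\subseteq\bQ_j(\Psi')$, so the same induction yields compatible closed embeddings $\mathscr X_\Psi\subseteq\mathscr X_{\Psi'}$ --- the last clause of (1).

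For the open embedding in (1), note that $\sum_{j=1}^{n}d_j=\binom{n}{2}+|\Psi|=\dim T^*_\Psi X$. Over the big Bruhat cells of all the $\P^{d_j}$ the iterated fibration trivialises, giving an open cell $V_0\cong\A^{\binom{n}{2}+|\Psi|}$ of $\mathscr X_\Psi$; I would identify $V_0$, equivariantly for the opposite Borel $B^-\subseteq G\hookrightarrow\tbG=\tbG_1$, with the restriction $T^*_\Psi X|_{B^-\cdot eB}\cong U^-\times\gn(\Psi)$ of the cotangent bundle over the open cell of $X=G/B$. Translating $V_0$ by the $G$-action on $\mathscr X_\Psi$ produces open subvarieties $g\cdot V_0\cong T^*_\Psi X|_{gB^-\cdot eB}$ whose pairwise overlaps restrict compatibly to the cotangent bundle over $(gB^-eB)\cap(g'B^-eB)$; these glue to a $(G\times\Gm)$-equivariant open embedding $T^*_\Psi X=\bigcup_{g}T^*_\Psi X|_{gB^-eB}\hookrightarrow\mathscr X_\Psi$, which is dense by the dimension count and irreducibility.

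For (2), the map $\mathsf m_\Psi$ is, by construction, the projection of $\mathscr X_\Psi$ onto its $\P(L(\varLambda_n))\cong\P(L(\varLambda_0))$ factor followed by the closed embedding $\Gr\hookrightarrow\P(L(\varLambda_0))$. Since $T^*_\Psi X$ is dense in $\mathscr X_\Psi$ and both $\mathsf m_\Psi$ and the Springer-type moment map $\mu_\Psi\colon T^*_\Psi X\to\mathcal N$ are proper, it suffices to check that on $T^*_\Psi X$ the morphism $\mathsf m_\Psi$ coincides with $\mu_\Psi$ followed by the standard embedding $\mathcal N\hookrightarrow\Gr$, $A\mapsto(\mathrm{Id}-z^{-1}A)\bG$; commutativity of the whole square then follows, and the image lies in $\mathcal N$ because $\mathcal N$ is closed in $\overline{\mu_\Psi(T^*_\Psi X)}$. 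The identification on $T^*_\Psi X$ is a direct computation: writing the generic point via $u\in U^-$ and $\xi\in\gn(\Psi)$, one follows the highest-weight vector $\bv_{\varLambda_n}$ through the successive $\tbG_j$-actions in (\ref{eqn:ind-def}) and reads off that its image line in $\P(L(\varLambda_n))$ represents the lattice $(\mathrm{Id}-z^{-1}\mathrm{Ad}(u)\xi)\,\C[\![z]\!]^n$.

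The hard part will be the identification in (1) of $V_0$ with $U^-\times\gn(\Psi)$ equivariantly for $B^-$: one must match the $d_j=(n-j)+h_j(\Psi)$ coordinates produced at the $j$-th floor of the tower with the $\binom{n}{2}+|\Psi|$ coordinates of $U^-\times\gn(\Psi)$, checking that the $n-j$ ``finite'' directions assemble the flag of $X$ while the $h_j(\Psi)$ directions contributed by the affine reflections $s_0,\dots,s_{h_j(\Psi)-1}$ assemble the cotangent fibre $\gn(\Psi)$ via the root-space decomposition of Lemma~\ref{lem:rootid}, with the loop-rotation $\Gm$ acting on the latter with the weight of the cotangent scaling. (Alternatively one could try to prove directly that $T^*_\Psi X=\mathsf m_\Psi^{-1}(W)$, where $W\subseteq\Gr$ is the locally closed image of $\mathcal N$, the content being that $\mathscr X_\Psi\setminus T^*_\Psi X$ maps into the boundary $\mathsf m_\Psi(\mathscr X_\Psi)\setminus W$.) A secondary non-formal ingredient is the explicit description of the embedding $\mathcal N\hookrightarrow\Gr$.
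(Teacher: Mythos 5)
First, note that the paper itself contains no proof of Theorem~\ref{thm:X-Psi}: it is quoted wholesale from \cite{Kat23a} (Theorem 3.10, Corollaries 3.12 and 4.10), so your attempt can only be judged on its own merits. The formal skeleton you set up is sound and consistent with what the paper uses elsewhere: the node set of $\tbG_j$ is indeed a proper cyclic interval with $j$ at an endpoint, $\varLambda_j$ is fixed by $s_i$ for $i\neq j$, the stabilizer computation together with $h_j(\Psi)\le h_{j+1}(\Psi)$ gives $\mathscr X^{(j)}_\Psi \cong \tbG_j\times^{\tbG_j\cap\tbG_{j+1}}\mathscr X^{(j+1)}_\Psi$, i.e.\ an iterated fibration with fibers $\P^{n-j+h_j(\Psi)}$ (exactly the structure invoked in the proof of Corollary~\ref{cor:EP}), and the count $\sum_j\bigl((n-j)+h_j(\Psi)\bigr)=\binom{n}{2}+|\Psi|=\dim T^*_\Psi X$ is correct, as is the easy monotonicity argument for $\mathscr X_\Psi\subseteq\mathscr X_{\Psi'}$.

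The genuine gap is that the two assertions which constitute the actual content of the theorem are precisely the steps you defer. For part (1), the identification of the big cell $V_0$ with $U^-\times\gn(\Psi)$, equivariantly and compatibly with the loop-rotation weight on the cotangent directions, is labelled ``the hard part'' and not carried out; moreover the subsequent gluing is not a formality --- knowing $g\cdot V_0\cong T^*_\Psi X|_{gB^-eB}$ chart by chart does not by itself produce a globally defined $(G\times\Gm)$-equivariant open immersion of the vector bundle $T^*_\Psi X$ (one must check the transition functions agree with those of $G\times^B\gn(\Psi)$ rather than a twisted form, i.e.\ verify a cocycle/independence-of-$g$ condition), and the ``proper bijective unramified'' claim in the fibration step is asserted without any check of injectivity on tangent spaces. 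For part (2), the statement that the highest-weight line tracked through (\ref{eqn:ind-def}) at a point of $T^*_\Psi X$ represents the lattice $(\mathrm{Id}-z^{-1}\mathrm{Ad}(u)\xi)\C[\![z]\!]^n$ is again only announced as ``a direct computation'' (and as written it sits in the wrong component of $\Gr$; the embedding compatible with $\tP_n$ and $\P(L(\varLambda_n))$ is $A\mapsto(z\,\mathrm{Id}-A)\C[\![z]\!]^n$, a central $z$-shift of your formula). In short, you have correctly reconstructed the formal scaffolding around the cited results, but the open-embedding identification and the lattice computation --- the non-formal content of \cite{Kat23a} that the present paper imports --- are not supplied, so the proposal does not amount to a proof of the statement.
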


We set $\Gr^{(n)} := \mathrm{Im}\, \mathsf{m}_{\Delta^+} \subset \Gr$. Thanks to \cite{Lus81a}, the set of $(T\times \Gm)$-fixed points of $\Gr^{(n)}$ is in bijection with $\gamma \in \tP_n$, and the set of $\tbG$-orbits of $\Gr^{(n)}$ is in bijection with $\tP^+_n$. Let $\imath_\gamma : [\gamma] \hookrightarrow \Gr$ denote the corresponding inclusion. Let $\bO_\la \subset \Gr$ be the $\tbG$-orbit corresponding to $\la \in \tP^+_n$. We have the intersection cohomology complex $\mathsf{IC}_\la$ supported on the closure of $\bO_\la$.

\begin{cor}\label{cor:Pdec}
In the setting of Theorem \ref{thm:X-Psi}, we have $\tbG \cdot T^*_\Psi X = \mathscr X_\Psi$ and $\mathrm{Im}\, \mathsf{m}_\Psi \subset \Gr^{(n)} = \tbG \cdot \mathcal N$.
\end{cor}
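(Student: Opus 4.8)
The plan is to derive both assertions directly from Theorem \ref{thm:X-Psi} together with the inductive construction (\ref{eqn:ind-def}) of $\mathscr X_\Psi$. For the first equality $\tbG \cdot T^*_\Psi X = \mathscr X_\Psi$, note that Theorem \ref{thm:X-Psi}(1) already gives the open embedding $T^*_\Psi X \subset \mathscr X_\Psi$, so the inclusion $\tbG \cdot T^*_\Psi X \subseteq \mathscr X_\Psi$ is immediate from $\tbG$-equivariance. For the reverse inclusion I would argue that $\mathscr X_\Psi$ is irreducible of dimension equal to $\dim T^*_\Psi X$: indeed, $T^*_\Psi X = G \times^B \gn(\Psi)$ is irreducible (it is a vector bundle over $X$), hence its image $\tbG \cdot T^*_\Psi X$ has a dense irreducible subset; since $T^*_\Psi X$ is open and dense in $\mathscr X_\Psi$ once we know $\mathscr X_\Psi$ is irreducible of the right dimension, and since $\tbG \cdot T^*_\Psi X$ is a $\tbG$-stable constructible set containing a dense open subset of the irreducible variety $\mathscr X_\Psi$, it must be a dense open. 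The closedness then follows because $\mathsf m_\Psi$ is proper (hence $\mathscr X_\Psi$ carries a $\tbG$-action through the image in $\Gr$, whose $\tbG$-orbits are finite in number on $\Gr^{(n)}$), so any $\tbG$-stable dense open is forced to be everything; alternatively, one reads off from (\ref{eqn:ind-def}) that $\mathscr X_\Psi = \tbG_1 \cdot (\ldots) = \tbG \cdot (\ldots)$ by the very definition, and identifies the bracketed piece with (the closure of) a $\tbB$-stable neighborhood of $T^*_\Psi X$.

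For the statement about $\mathrm{Im}\,\mathsf m_\Psi$, the inclusion $\mathscr X_\Psi \subset \mathscr X_{\Delta^+}$ from Theorem \ref{thm:X-Psi}(1) gives $\mathrm{Im}\,\mathsf m_\Psi \subseteq \mathrm{Im}\,\mathsf m_{\Delta^+} = \Gr^{(n)}$ directly, since the embeddings are compatible with the maps to $\Gr$. It remains to identify $\Gr^{(n)}$ with $\tbG \cdot \mathcal N$. By Theorem \ref{thm:X-Psi}(2) applied to $\Psi = \Delta^+$, the moment map $\mu_{\Delta^+} : T^* X \to \mathcal N$ is the Springer resolution, which is surjective onto $\mathcal N$; composing with the $\tbG$-translates and using that $T^* X$ is dense in $\mathscr X_{\Delta^+}$ together with $\mathscr X_{\Delta^+} = \tbG \cdot T^* X$ (the first part of this Corollary with $\Psi = \Delta^+$), we get $\mathrm{Im}\,\mathsf m_{\Delta^+} = \tbG \cdot \overline{\mathrm{Im}\,\mu_{\Delta^+}} = \tbG \cdot \mathcal N$, where the overline/closure is absorbed because $\mathsf m_\Psi$ is proper so $\mathrm{Im}\,\mathsf m_{\Delta^+}$ is already closed. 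Thus $\Gr^{(n)} = \tbG \cdot \mathcal N$, and combining with the previous paragraph yields $\mathrm{Im}\,\mathsf m_\Psi \subset \Gr^{(n)} = \tbG \cdot \mathcal N$.

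The main obstacle I anticipate is the density/irreducibility bookkeeping in the first part: one must be careful that $\tbG \cdot T^*_\Psi X$ — a priori only a constructible $\tbG$-stable subset — actually exhausts $\mathscr X_\Psi$, and not merely a dense open. The cleanest route is probably to avoid dimension counting entirely and instead read the equality off (\ref{eqn:ind-def}) literally: since $\tbG_1 = \tbG$, we have $\mathscr X_\Psi = \mathscr X_\Psi^{(1)} = \tbG \cdot ([\bv_{\varLambda_1}] \times \mathscr X_\Psi^{(2)})$, and one identifies the $\tbG_1$-orbit of the slice $[\bv_{\varLambda_1}] \times \mathscr X_\Psi^{(2)}$ with $\tbG \cdot T^*_\Psi X$ by matching it against the open embedding of Theorem \ref{thm:X-Psi}(1) — concretely, $T^*_\Psi X$ and the slice generate the same $\tbG$-saturation because the slice lies in the closure of (a $\tbB$-translate of) $T^*_\Psi X$ inside $\mathscr X_\Psi$. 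If that identification is granted from \cite{Kat23a}, the Corollary is essentially formal.
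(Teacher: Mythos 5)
The gap is in the first assertion, and it is real. The inclusion $\tbG \cdot T^*_\Psi X \subseteq \mathscr X_\Psi$ is indeed immediate from equivariance and Theorem \ref{thm:X-Psi}(1), but neither of your two routes to the reverse inclusion works. The claim that ``any $\tbG$-stable dense open is forced to be everything'' because $\Gr^{(n)}$ has finitely many $\tbG$-orbits is simply false: a dense open invariant subset of a variety with finitely many orbits is typically proper (the open $\tbG$-orbit in $\Gr^{(n)}$ itself, or the big cell of $\P^1$ under a Borel action, are counterexamples), and properness of $\mathsf{m}_\Psi$ contributes nothing to making the set $\tbG \cdot T^*_\Psi X$ --- which is a union of translates of an open set, hence open but a priori not closed --- exhaust $\mathscr X_\Psi$. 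Your fallback via (\ref{eqn:ind-def}) defers the whole content to an ``identification granted from \cite{Kat23a}'': but even if the slice $[\bv_{\varLambda_1}]\times \mathscr X_\Psi^{(2)}$ were known to lie in the closure of a $\tbB$-translate of $T^*_\Psi X$, lying in a closure does not place a point inside $\tbG \cdot T^*_\Psi X$. What is actually needed, and what the paper supplies, is that every $G[\![z]\!]$-orbit \emph{meets} the open subset $T^*_\Psi X$. The paper's argument: a point of $\mathscr X_\Psi \subset \prod_{j}\P ( L ( \varLambda_j ))$ is a collection of lines $[\bv_j]$, and one can choose $g \in G[\![z]\!]$ so that every $g\bv_j$ has a nonzero $\Gm$-degree-zero component; by the description of $T^*_\Psi X$ as the attracting locus of the loop rotation inside $\mathscr X_\Psi$ (\cite[Theorem 3.10]{Kat23a}), this forces $\{[g\bv_j]\}_j \in T^*_\Psi X$. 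Equivalently, every orbit closure contains $X$, and since $T^*_\Psi X$ is open and contains $X$, every orbit meets it. This explicit translation argument is the missing idea; without it, your proof of $\mathscr X_\Psi \subseteq \tbG\cdot T^*_\Psi X$ does not close.

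Your treatment of the second assertion is essentially the intended one: $\mathrm{Im}\,\mathsf{m}_\Psi \subseteq \Gr^{(n)}$ follows from the compatible embedding $\mathscr X_\Psi \subset \mathscr X_{\Delta^+}$ of Theorem \ref{thm:X-Psi}(1), and $\Gr^{(n)} = \tbG\cdot\mathcal N$ follows from the case $\Psi = \Delta^+$ of the first assertion together with the diagram in Theorem \ref{thm:X-Psi}(2) and surjectivity of $\mu_{\Delta^+}$ onto $\mathcal N$. But since this relies on the first assertion, it inherits the gap above.
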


\begin{proof}
We prove the first assertion. The $\Gm$-action attracts the Zariski dense subset $T^*_\Psi X \subset \mathscr X_\Psi$ to $X$ \cite[Theorem 3.10]{Kat23a}. Thus, it suffices to find that $X$ is contained in every $G[\![z]\!]$-orbit closure of $\mathscr X_\Psi$. For a collection $\{[\bv_j]\}_j \in \prod_{j=1}^n \P ( L ( \varLambda_j ))$ of point, we can find $g \in G[\![z]\!]$ such that $g \bv_j \in L ( \varLambda_j )$ contains a non-zero $\Gm$-degree zero component. It follows that $\{[g\bv_j]\}_j \in T^*_\Psi X$, that implies that every $G[\![z]\!]$-orbit contains $X$ in its closure.

The second assertion is a consequence of Theorem \ref{thm:X-Psi} 1).
\end{proof}

\begin{cor}\label{cor:dec}
Keep the setting of Theorem \ref{thm:X-Psi}. We have
$$(\mathsf{m}_\Psi)_* \C_{\mathscr X_\Psi} \cong \bigoplus_{\la \in \tP_n^+} V^\la_\Psi \boxtimes \mathsf{IC}_\la, \hskip 5mm \text{where $V^\la_\Psi \in D^b_c(\mathrm{pt})$ is a graded vector space}.$$
\end{cor}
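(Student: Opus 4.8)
The plan is to show that $(\mathsf{m}_\Psi)_* \C_{\mathscr X_\Psi}$ is a semisimple perverse sheaf (up to shift) on $\Gr$, and then to identify its simple summands. The starting point is the decomposition theorem of \cite{BBDG}: since $\mathscr X_\Psi$ is smooth and projective (hence $\C_{\mathscr X_\Psi}[\dim \mathscr X_\Psi]$ is a self-dual semisimple perverse sheaf) and $\mathsf{m}_\Psi$ is proper by Theorem \ref{thm:X-Psi}, the pushforward $(\mathsf{m}_\Psi)_* \C_{\mathscr X_\Psi}[\dim \mathscr X_\Psi]$ is a direct sum of shifts of intersection cohomology complexes $\mathsf{IC}(Z, \mathcal L)$ where $Z$ ranges over closed subvarieties of $\Gr$ and $\mathcal L$ over irreducible local systems on open smooth subsets of $Z$. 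So the content is (a) the supports $Z$ that occur are exactly the closures $\overline{\bO_\la}$ for $\la \in \tP_n^+$, and (b) the only local system occurring is the trivial one, so that $\mathsf{IC}(Z, \mathcal L) = \mathsf{IC}_\la$.

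For (a): By Corollary \ref{cor:Pdec}, the image of $\mathsf{m}_\Psi$ is contained in $\Gr^{(n)} = \tbG \cdot \mathcal N$, whose $\tbG$-orbits are precisely the $\bO_\la$ with $\la \in \tP_n^+$; hence every support $Z$ is a union of such orbit closures, and being irreducible, $Z = \overline{\bO_\la}$ for a single $\la$. For (b): the key point is $\tbG$-equivariance. The map $\mathsf{m}_\Psi$ is $\tbG$-equivariant by construction (see the paragraph preceding Theorem \ref{thm:X-Psi}), and $\C_{\mathscr X_\Psi}$ is a $\tbG$-equivariant sheaf, so $(\mathsf{m}_\Psi)_* \C_{\mathscr X_\Psi}$ is $\tbG$-equivariant. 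Each orbit $\bO_\la$ is a $\tbG$-orbit with connected (pro-algebraic) stabilizer — this is the standard fact that $\tbG$-orbits on $\Gr$ are simply connected in the relevant sense, so the only $\tbG$-equivariant irreducible local system on $\bO_\la$ is the constant sheaf. Therefore every simple summand with support $\overline{\bO_\la}$ is a shift of $\mathsf{IC}_\la$. Collecting the multiplicities and shifts of each $\mathsf{IC}_\la$ into a complex of graded vector spaces $V^\la_\Psi \in D^b_c(\mathrm{pt})$ (i.e. writing $\bigoplus_k \mathsf{IC}_\la[m_k] = V^\la_\Psi \boxtimes \mathsf{IC}_\la$ with $V^\la_\Psi = \bigoplus_k \C[m_k]$), and shifting back by $[-\dim \mathscr X_\Psi]$, yields the stated decomposition.

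The main obstacle is the equivariance/simple-connectedness step (b): one must be careful that $\Gr$ and the group $\tbG$ here are infinite-dimensional (ind-scheme and pro-algebraic group respectively), so the decomposition theorem and the classification of equivariant simple perverse sheaves must be invoked in the form appropriate for the affine Grassmannian. This is, however, entirely standard in the geometric Satake literature \cite{MV07, Lus81a}: the orbits $\bO_\la$ are (finite-dimensional) quotients of $\tbG$ by parahoric-type subgroups whose reductive quotients are connected, so all equivariant local systems are trivial, and the decomposition theorem applies because $\mathsf{m}_\Psi$ factors through a finite-dimensional closed subvariety of $\Gr^{(n)}$ (again by Corollary \ref{cor:Pdec}, the image is contained in $\tbG \cdot \mathcal N$, which for the purposes here is a finite union of finite-dimensional orbit closures). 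A secondary point worth stating explicitly is that no $\mathsf{IC}_\la$ with $\la \notin \tP_n^+$ can appear, which is immediate from the containment $\mathrm{Im}\, \mathsf{m}_\Psi \subset \Gr^{(n)}$ together with the parametrization of $\tbG$-orbits of $\Gr^{(n)}$ by $\tP_n^+$ recalled just before the corollary. With these observations in place the proof is a direct application of the decomposition theorem, so I would keep it short.
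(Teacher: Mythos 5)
Your argument is correct and is essentially the paper's own proof: the paper simply invokes the decomposition theorem of \cite{BBDG} applied to the proper map $\mathsf{m}_\Psi$, and your steps (a) and (b) — identifying the supports with the orbit closures $\overline{\bO}_\la$, $\la \in \tP_n^+$, via Corollary \ref{cor:Pdec}, and ruling out nontrivial local systems by $\tbG$-equivariance and connectedness/simple-connectedness of the orbits — are exactly the standard details the paper leaves implicit. No changes needed.
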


\begin{proof}
Apply the decomposition theorem \cite[Th\'eor\`eme 6.2.5]{BBDG} to $\mathsf{m}_\Psi$.
\end{proof}

For each $\la \in \tP^+_n$, let $\cO_{\la}$ denote the $G$-orbit of $\mathcal N$ corresponding to the nilpotent Jordan blocks of type $\la$. It carries the intersection cohomology complex $\mathsf{IC}_\la^{\mathrm{Spr}}$ supported on $\overline{\cO}_\la$ (see e.g. \cite{BM81}).

\begin{thm}[Lusztig]\label{thm:lusztig}
For $\la \in \tP^+_n$, it holds:
\begin{enumerate}
\item We have $s_\la = \sum_{\gamma \in \tP_n} X^{\gamma} \dim \, H^{\bullet} (\imath_{\gamma}^* \mathsf{IC}_\la ) \in \La^{(n)}$;
\item We have $\dim \, H^{\bullet} ( \Gr, \mathsf{IC}_\la) = \sum_{\gamma \in \tP_n} \dim \, H^{\bullet} (\imath_{\gamma}^* \mathsf{IC}_\la )$;
\item For each $\la \in \tP^+_n$, the restriction of $\mathsf{IC}_\la$ to $\mathcal N$ yields $\mathsf{IC}^{\mathrm{Spr}}_\la$.
\end{enumerate}
\end{thm}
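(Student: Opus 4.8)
The plan is to derive all three parts by combining two classical inputs and then matching conventions: the geometric Satake equivalence \cite{MV07}, which identifies $H^{\bullet}(\Gr,\mathsf{IC}_\la)$ with the irreducible $\mathop{GL}(n,\C)$-module $V_\la$ of highest weight $\la$, and Lusztig's description \cite{Lus81a} of the local intersection cohomology of affine Schubert varieties (and of nilpotent orbit closures) in terms of Kostka--Foulkes polynomials. The content is the reduction to these plus the bookkeeping of labels.

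For part (1), I would first note that each $(T\times\Gm)$-fixed point $[\gamma]$ with $\gamma\in\tP_n$ lies in the orbit $\bO_{\gamma^+}$, where $\gamma^+\in\tP_n^+$ is the dominant rearrangement of $\gamma$; since $\mathsf{IC}_\la$ is $\tbG$-equivariant, $\dim H^{\bullet}(\imath_\gamma^*\mathsf{IC}_\la)$ depends on $\gamma$ only through $\gamma^+$, and by \cite{Lus81a} it equals the value at $1$ of the Kostka--Foulkes polynomial $K_{\la\gamma^+}(q)$, namely the Kostka number $K_{\la\gamma^+}=\dim(V_\la)_{\gamma^+}$. Grouping the sum along $\gamma\mapsto\gamma^+$ then gives $\sum_{\gamma\in\tP_n}X^\gamma\dim H^{\bullet}(\imath_\gamma^*\mathsf{IC}_\la)=\sum_{\mu\in\tP_n^+}K_{\la\mu}\,m_\mu=s_\la$ in $\La^{(n)}$, the last equality being the classical monomial expansion of a Schur function (valid in $n$ variables since $\ell(\la)\le n$). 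For part (2), the left-hand side is $\dim V_\la$ by geometric Satake, while the same computation gives $\sum_{\gamma\in\tP_n}\dim H^{\bullet}(\imath_\gamma^*\mathsf{IC}_\la)=\sum_{\mu\in\tP_n^+}K_{\la\mu}\cdot\#\{\gamma\in\tP_n:\gamma^+=\mu\}=\sum_{\gamma\in\tP_n}\dim(V_\la)_\gamma=\dim V_\la$, using that $\#\{\gamma:\gamma^+=\mu\}$ has the size of the Weyl orbit of $\mu$, that weight multiplicities are Weyl-invariant, and that every weight of $V_\la$ lies in $\tP_n$. Alternatively --- and this is the form I would keep for later use --- part (2) can be seen geometrically: twisting the loop rotation by a regular coweight yields a $\Gm$-action on $\Gr^{(n)}$ with isolated fixed points $\{[\gamma]\}_{\gamma\in\tP_n}$ and affine attracting cells, and the pointwise purity of $\mathsf{IC}_\la$ forces the spectral sequence of this stratification to degenerate, so the global cohomology is the direct sum of the stalk cohomologies at the fixed points.

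For part (3), I would use Theorem \ref{thm:X-Psi}(2) with $\Psi=\Delta^+$, where $\mu_{\Delta^+}:T^*X\to\mathcal N$ is the Springer resolution, so that the resulting embedding $\mathcal N\hookrightarrow\Gr$ satisfies $\overline{\cO}_\la=\mathcal N\cap\overline{\bO}_\la$ and matches the dominance orders. The crux is to verify that $\mathsf{IC}_\la|_{\mathcal N}$ is again an intersection cohomology complex; I would get this from the $\Gm$-contraction of $\mathscr X_{\Delta^+}$ onto $X$ (equivalently, by realizing $\mathcal N$ as an attracting locus for a suitable $\Gm$-action on $\Gr^{(n)}$, so that hyperbolic localization applies). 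Granting that, $\mathsf{IC}_\la|_{\mathcal N}$ is the simple perverse sheaf on $\overline{\cO}_\la$ with trivial local system on $\cO_\la$, hence $\cong\mathsf{IC}^{\mathrm{Spr}}_\la$, and one cross-checks via Lusztig's Green-polynomial formula \cite{Lus81a}, which yields the same Kostka--Foulkes stalks on both sides. The main obstacle I anticipate is not any single ingredient --- all are in the literature --- but aligning them with the normalizations of \cite{Kat23a}: the precise form of $\mathcal N\hookrightarrow\Gr$, the matching of $\tP_n$ with coweights of $\mathop{GL}(n)$, and the degree and support conventions, so that the three displayed identities hold verbatim; the label-matching in parts (1) and (3) is where I would take the most care.
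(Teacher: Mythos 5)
Your parts (1) and (2) are essentially the paper's own route: part (1) is Lusztig's Kostka--Foulkes description of the stalks from \cite{Lus81a} together with the $\Sym_n$-invariance of $\dim H^{\bullet}(\imath_\gamma^*\mathsf{IC}_\la)$ and the classical expansion $s_\la=\sum_\mu K_{\la\mu}m_\mu$, which is exactly how the paper argues; for part (2) the paper simply cites \cite{Lus81a}, and both of your alternatives (geometric Satake \cite{MV07} plus Weyl-invariance of weight multiplicities, or the parity/affine-cell degeneration argument) are correct substitutes of comparable strength.

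Part (3), however, contains a genuine gap at precisely the point you call the crux. Hyperbolic localization concerns the functors $p_!\,\imath^*$ (and $p_*\,\imath^!$) from the ambient variety to the fixed locus and preserves purity; it does not say that the plain restriction of $\mathsf{IC}_\la$ to an attracting locus is an intersection cohomology complex of that locus, and restriction of an IC complex to a non-open locally closed subvariety fails to be IC in general. The $\Gm$-contraction of $\mathscr X_{\Delta^+}$ onto $X$ happens upstairs and gives no direct control of $\mathsf{IC}_\la$ on $\Gr^{(n)}$. Your ``cross-check'' via Green polynomials also cannot close the gap: equality of Kostka--Foulkes stalk dimensions does not determine a complex up to isomorphism unless one already knows that $\mathsf{IC}_\la|_{\mathcal N}$ is the intermediate extension of a rank-one local system on $\cO_\la$, which is the very point at issue. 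What is actually needed --- and what the paper invokes by citing \cite{Lus81b} --- is Lusztig's statement that $\mathcal N$ sits inside $\Gr^{(n)}$ as an \emph{open} subvariety compatibly with the stratifications, i.e.\ $\cO_\la=\mathcal N\cap\bO_\la$ for all $\la\in\tP^+_n$ (consistent with $\dim\cO_\la=\dim\bO_\la$); openness makes $\mathsf{IC}_\la|_{\mathcal N}$ an IC complex for free, and triviality of the local system on the open stratum then identifies it with $\mathsf{IC}^{\mathrm{Spr}}_\la$. Theorem \ref{thm:X-Psi}(2) only supplies the embedding and the commutative square, not the openness or the matching of strata, so you should either quote Lusztig's result as the paper does or prove that openness directly.
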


\begin{proof}
By \cite[Theorem 6.1]{Lus81a} and \cite[Theorem 4.1]{Lus81a}, we find
$$s_\la = \sum_{\mu \in \tP_n^+} m_\mu \dim \, H^{\bullet} (\imath_{\mu}^* \mathsf{IC}_\la ).$$
We have $H^{\bullet} (\imath_{\gamma}^* \mathsf{IC}_\la ) = H^{\bullet} (\imath_{u\gamma}^* \mathsf{IC}_\la )$ for each $\mu \in \tP_n^+$ and $u \in \Sym_n$ by the action of $\Sym_n \subset G[\![z]\!]$. This yields the first assertion. The second assertion is follows from \cite[\S 11]{Lus81a}. The last assertion follows from \cite[\S 2]{Lus81b}.
\end{proof}

\subsection{Inductions of character sheaves}\label{subsec:ind}
Let $\mathcal M_{n}$ denote the category of $\mathop{GL}(n,\C)$-equivariant perverse sheaves on the nilpotent cone $\mathcal N_n = \mathcal N$ of $G$ \cite[\S 5]{BeL94}. A simple equivariant perverse sheaf in $\mathcal M_m$ belongs to $\{ \mathsf{IC}_\la^{\mathrm{Spr}} \}_{\la \in \tP^+_n}$. We set $\mathcal M := \bigoplus_{m > 0} \mathcal M_m$. The sheaf $\mathsf{IC}^{\mathrm{Spr}}_{m} \in \mathcal M_m$ ($m \in \Z_{\ge 0}$) is the skyscraper sheaf supported on $0 \in \mathcal N_{m}$. 

We consider the subdivision $n=n_1 +n_2$ and consider two subgroups $G_i := \mathop{GL}(n_i,\C)\subset G=\mathop{GL}(n,\C)$ $(i=1,2)$ such that $T_i := (T \cap G_i)$ and $B_i := (B \cap G_i)$ $(i=1,2)$ satisfies $T = T_1 \cdot T_2$, $\varepsilon_j (T_1) = 1$ for $1 \le j \le n_1$, and $\varepsilon_j (T_2) = 1$ for $n_1 < j \le n$. We have a parabolic subgroup $P = (G_1 \times G_2)B = (G_1 \times G_2) \ltimes U \subset G$. We set $M_{n_1,n_2} := \mathrm{Lie} \, P \subset M_n$. For $i=1,2$, we set $\mathcal N_i \subset M_{n_i}$ to be the nilpotent cone of $M_{n_i}$. For $\mathcal F_i \in \mathcal M_{n_i}$ ($i=1,2$), we form a complex $\mathcal F_1 \boxtimes \mathcal F_2$ supported on
$$\mathcal N_{n_1} \times \mathcal N_{n_2} \subset M_{n_1} \oplus M_{n_1}.$$
We have a $P$-equivariant surjection $\eta: M_{n_1,n_2} \rightarrow M_{n_1} \oplus M_{n_1}$. We have two smooth maps
$$G \times ^P M_{n_1,n_2} \stackrel{\mathtt q}{\longleftarrow} G \times M_{n_1,n_2} \stackrel{\mathtt p}{\longrightarrow} M_{n_1,n_2}, \hskip 5mm \text{where $\mathtt q$ is a principal $P$-bundle}.$$
We have a perverse sheaf $\mathscr F_1 \odot \mathscr F_2$ on $G \times ^P M_{n_1,n_2}$ such that $\mathtt q^* (\mathscr F_1 \odot \mathscr F_2) \cong \mathtt p^* \eta^* ( \mathscr F_1 \boxtimes \mathscr F_2 )[2 n_1 n_2]$ and it is $G$-equivariant. Using this and a proper morphism $\mu_P: G \times^P M_{n_1,n_2} \to M_{n}$, we set
\begin{equation}
\mathscr F_1 \star \mathscr F_2 := (\mu_P)_*( \mathscr F_1 \odot \mathscr F_2).\label{eqn:conv}
\end{equation}

\begin{thm}[Lusztig, Hotta-Shimomura]\label{thm:ind} We have a monoidal functor $\star : \mathcal M \times \mathcal M \longrightarrow \mathcal M$ extending {\rm (\ref{eqn:conv})} that respects the class of perverse sheaves. For each $\la \in \tP_{n_1}^+$ and $\mu \in \tP_{n_2}^+$, we have
\begin{equation}
\mathsf{IC}_\la^{\mathrm{Spr}} \star \mathsf{IC}_\mu^{\mathrm{Spr}} \cong \bigoplus_{\nu \in \tP^+_{n}} \left( \mathsf{IC}_\nu^{\mathrm{Spr}} \right)^{\oplus c_{\la,\mu}^{\nu}},\label{eqn:LRcoeff}
\end{equation}
where $c_{\la,\mu}^{\nu}$ is the Littlewood-Richardson coefficient.
\end{thm}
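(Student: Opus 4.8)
The plan is to treat the statement in three stages: (i) that $\star$ is a well-defined, bi-additive, monoidal functor on the ambient equivariant derived categories; (ii) that it preserves perversity, which I reduce to computing the parabolic induction of two full Springer sheaves; and (iii) the computation of the multiplicities, read off from the Springer correspondence. Stage (i) is largely formal: bi-additivity and compatibility with shifts follow at once from the construction, since $\boxtimes$, $\eta^{*}$, $\mathtt p^{*}$, descent along the principal $P$-bundle $\mathtt q$, and $(\mu_{P})_{*}$ are all additive. Associativity is transitivity of parabolic induction: for $n=n_{1}+n_{2}+n_{3}$ with standard parabolics $P'\subset P''\subset G$ of the corresponding block shapes, base change and the projection formula along the diagram relating $\mu_{P'}$, $\mu_{P''}$, and the intermediate projections identify both $(\mathscr F_{1}\star\mathscr F_{2})\star\mathscr F_{3}$ and $\mathscr F_{1}\star(\mathscr F_{2}\star\mathscr F_{3})$ with a single threefold induction $(\mu_{P'})_{*}$ of the correspondingly descended sheaf on $G\times^{P'}\mathrm{Lie}\,P'$; this is Lusztig's argument for induction of character sheaves. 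The monoidal unit is the skyscraper $\mathsf{IC}_{0}^{\mathrm{Spr}}$ on $\mathcal N_{0}=\mathrm{pt}$ (adjoining the $m=0$ summand to $\mathcal M$).

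For (ii), that $\star$ preserves perversity in general is part of Lusztig's theory of parabolic induction of equivariant perverse sheaves on nilpotent cones; for the $\mathsf{IC}$-sheaf inputs relevant to (\ref{eqn:LRcoeff}) it also follows from the computation below. Let $\pi_{m}\colon\widetilde{\mathcal N}_{m}=G\times^{B}\gn\to\mathcal N_{m}$ be the Springer resolution (with $G=\mathop{GL}(m)$) and $\mathsf{Spr}_{m}:=(\pi_{m})_{*}\C_{\widetilde{\mathcal N}_{m}}[\dim\widetilde{\mathcal N}_{m}]$; by Springer's theorem $\mathsf{Spr}_{m}$ is a semisimple perverse sheaf and $\mathsf{Spr}_{m}\cong\bigoplus_{\la\vdash m}\mathsf{IC}_{\la}^{\mathrm{Spr}}\otimes V_{\la}$, where $V_{\la}$ is the irreducible $\Sym_{m}$-module attached to $\la$ by the Springer correspondence and the Springer $\Sym_{m}$-action sits on the multiplicity spaces --- so each $\mathsf{IC}_{\la}^{\mathrm{Spr}}$ is a direct summand of $\mathsf{Spr}_{m}$. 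Pulling $\mathsf{Spr}_{n_{1}}\boxtimes\mathsf{Spr}_{n_{2}}$ back along $\eta$ and descending, proper base change identifies the resulting complex on $G\times^{P}M_{n_{1},n_{2}}$ with the pushforward of a constant sheaf along a morphism whose source is $G\times^{P}(P\times^{B}\gn)\cong G\times^{B}\gn=\widetilde{\mathcal N}_{n}$ and whose composite with $\mu_{P}$ is $\pi_{n}$; a short dimension count shows the shift $[2n_{1}n_{2}]$ makes the normalizations agree, whence $\mathsf{Spr}_{n_{1}}\star\mathsf{Spr}_{n_{2}}\cong\mathsf{Spr}_{n}$. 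In particular $\mathsf{IC}_{\la}^{\mathrm{Spr}}\star\mathsf{IC}_{\mu}^{\mathrm{Spr}}$ is a direct summand of $\mathsf{Spr}_{n}$, hence a semisimple perverse sheaf.

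For (iii), I upgrade $\mathsf{Spr}_{n_{1}}\star\mathsf{Spr}_{n_{2}}\cong\mathsf{Spr}_{n}$ to an isomorphism of $(\Sym_{n_{1}}\times\Sym_{n_{2}})$-equivariant objects, the group acting on the left through the two tensor factors and on the right through the restriction of the Springer $\Sym_{n}$-action along $\Sym_{n_{1}}\times\Sym_{n_{2}}\hookrightarrow\Sym_{n}$; this compatibility is the theorem of Hotta--Shimomura. Comparing $\mathsf{IC}_{\nu}^{\mathrm{Spr}}$-isotypic parts and reading off the resulting $(\Sym_{n_{1}}\times\Sym_{n_{2}})$-modules on the multiplicity spaces gives, for $\la\vdash n_{1}$, $\mu\vdash n_{2}$, $\nu\vdash n$,
\begin{align*}
\bigl[\mathsf{IC}_{\la}^{\mathrm{Spr}}\star\mathsf{IC}_{\mu}^{\mathrm{Spr}}:\mathsf{IC}_{\nu}^{\mathrm{Spr}}\bigr]
&= \bigl[\Res^{\Sym_{n}}_{\Sym_{n_{1}}\times\Sym_{n_{2}}}V_{\nu}:V_{\la}\boxtimes V_{\mu}\bigr]\\
&= \bigl[V_{\nu}:\Ind^{\Sym_{n}}_{\Sym_{n_{1}}\times\Sym_{n_{2}}}(V_{\la}\boxtimes V_{\mu})\bigr]
= c_{\la,\mu}^{\nu},
\end{align*}
the second equality by Frobenius reciprocity and the third because $\ch$ sends $\Ind^{\Sym_{n}}_{\Sym_{n_{1}}\times\Sym_{n_{2}}}(V_{\la}\boxtimes V_{\mu})$ to $s_{\la}s_{\mu}=\sum_{\nu}c_{\la,\mu}^{\nu}s_{\nu}$. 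Since $\mathsf{IC}_{\la}^{\mathrm{Spr}}\star\mathsf{IC}_{\mu}^{\mathrm{Spr}}$ is semisimple and the $\mathsf{IC}_{\nu}^{\mathrm{Spr}}$ exhaust the simple objects of $\mathcal M_{n}$, this is (\ref{eqn:LRcoeff}); any transposition ambiguity in the normalization of the Springer correspondence is immaterial since $c_{\la,\mu}^{\nu}=c_{\la',\mu'}^{\nu'}$.

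The step I expect to be the main obstacle is the equivariant refinement in (iii): matching the $\Sym_{n_{1}}\times\Sym_{n_{2}}$-action on $\mathsf{Spr}_{n_{1}}\star\mathsf{Spr}_{n_{2}}$ coming from the two factors --- transported through the convolution diagram from the monodromy of the Springer sheaves over their regular semisimple loci --- with the restriction of the intrinsic Springer $\Sym_{n}$-action on $\mathsf{Spr}_{n}$. The clean way is to pass to Grothendieck's simultaneous resolution $\tg\to\g$, where both actions are literally deck transformations over the regular semisimple locus, and to check that the smooth base changes defining $\star$ intertwine them; otherwise one invokes Hotta--Shimomura directly. Everything else --- bi-additivity, transitivity, the identification $G\times^{P}(P\times^{B}\gn)\cong\widetilde{\mathcal N}_{n}$ together with its dimension count, and the Frobenius-characteristic computation --- is routine.
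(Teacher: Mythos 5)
Your proposal is correct, and it reaches (\ref{eqn:LRcoeff}) by a genuinely different route than the paper. Both arguments get monoidality from transitivity of parabolic induction (\cite[Proposition 4.2]{Lus85}) and quote Lusztig for preservation of perversity, but the paper's multiplicity computation goes through iterated products of the skyscraper generators: by \cite{BM81} and \cite{HSh}, $\mathsf{IC}^{\mathrm{Spr}}_{\gamma_1}\star\cdots\star\mathsf{IC}^{\mathrm{Spr}}_{\gamma_n}\cong(\mu_\gamma)_*\C[\dim T^*G/P_\gamma]$ decomposes with Kostka-type multiplicities $k^\la_\gamma$, unitriangular for dominance, and (\ref{eqn:LRcoeff}) is then extracted ``by monoidality,'' i.e.\ by a split-Grothendieck-ring argument identifying the classes $[\mathsf{IC}^{\mathrm{Spr}}_\la]$ with Schur functions. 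You instead convolve the full Springer sheaves, prove $\mathsf{Spr}_{n_1}\star\mathsf{Spr}_{n_2}\cong\mathsf{Spr}_n$ via smooth base change along $\eta$ and the identification $G\times^P(P\times^B\gn)\cong G\times^B\gn$ (your dimension count showing the shift $[2n_1n_2]$ restores the perverse normalization is correct), and then isolate the individual $(\la,\mu)$-blocks using the $\Sym_{n_1}\times\Sym_{n_2}$-equivariance of that isomorphism, so that the multiplicity of $\mathsf{IC}^{\mathrm{Spr}}_\nu$ becomes $[\Res^{\Sym_n}_{\Sym_{n_1}\times\Sym_{n_2}}V_\nu:V_\la\boxtimes V_\mu]=c_{\la,\mu}^{\nu}$ by Frobenius reciprocity. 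The trade-off: the paper only needs the non-equivariant decomposition (\ref{eqn:HSh}) but leaves the final passage from Kostka data to Littlewood--Richardson structure constants as implicit bookkeeping, whereas your route makes the multiplicities immediate and gives semisimplicity of $\mathsf{IC}_\la^{\mathrm{Spr}}\star\mathsf{IC}_\mu^{\mathrm{Spr}}$ for free (as a direct summand of $\mathsf{Spr}_n$), at the price of invoking the stronger Weyl-group-equivariant form of the induction theorem (Lusztig, Hotta--Shimomura) --- precisely the point you flag as the main obstacle, and legitimately available in the cited sources; your observation that the transpose ambiguity in normalizing the Springer correspondence is harmless because $c_{\la,\mu}^{\nu}=c_{\la',\mu'}^{\nu'}$ settles the only remaining normalization issue.
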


\begin{proof}[Sketch of proof]
The monoidal structure follows from \cite[Proposition 4.2]{Lus85} since the both of $(\mathscr F_1 \star \mathscr F_2 ) \star \mathscr F_3$ and $\mathscr F_1 \star ( \mathscr F_2  \star \mathscr F_3)$ for $\mathscr F_i \in \mathcal M_{m_i}$ ($i=1,2,3$) with $n=m_1+m_2+m_3$ are realized as a common three-step parabolic induction.

The functor $\star$ preserves the class of perverse sheaves by \cite[Proposition 4.5]{Lus84} (see also \cite[\S 4]{Lus85}). Let $\gamma = \sum_{i=1}^n \gamma_i\varepsilon_i \in \tP_n^+$ and let $B \subset P_{\gamma} \subset G$ be a standard parabolic subgroup whose Levi subgroup is $\prod_{i=1}^n \mathop{GL}(\gamma_i,\C)$. It defines a map $\mu_\gamma : T^*G/P_{\gamma} \rightarrow \mathcal N$. In view of \cite{BM81} and \cite[\S 1, \S 8]{HSh}, we have
\begin{equation}
( \mu_\gamma )_*\C [\dim \, T^*G/P_{\gamma}] \cong \bigoplus_{\la \in \tP^+_n} \C^{k^{\la}_{\gamma}} \boxtimes \mathsf{IC}_\la^{\mathrm{Spr}},\label{eqn:HSh}
\end{equation}
where $k^{\la}_{\gamma}$ is the number of standard $\gamma$-tableaux of shape $\la$. Note that $k^\la_\gamma \neq 0$ only if $\la \unlhd \gamma$, and $k^{\gamma}_{\gamma}=1$ for each $\gamma \in \tP_n^+$. By \cite[Proposition 4.2]{Lus85} and (\ref{eqn:HSh}), we deduce
\begin{equation}
\mathsf{IC}^{\mathrm{Spr}}_{\gamma_1} \star \cdots \star \mathsf{IC}_{\gamma_n}^{\mathrm{Spr}} \cong ( \mu_\gamma )_*\C [\dim \, T^*G/P_{\gamma}] \cong \bigoplus_{\la \unlhd \gamma} \C^{k^{\la}_{\gamma}} \boxtimes \mathsf{IC}_\la^{\mathrm{Spr}}\label{eqn:indIC}
\end{equation}
for each $\gamma \in \tP_n^+$. As $k^{\la}_{\gamma}$ $(\la, \gamma \in \tP^+_n)$ are the Littlewood-Richardson coefficients \cite[I \S 9]{Mac95}, we conclude (\ref{eqn:LRcoeff}) by the monoidality.
\end{proof}

\section{Main Results}
Keep the setting of the previous section.

\subsection{Statements}\label{subsec:statement}

\begin{thm}\label{thm:main}
For each Hessenberg function $\tth \in \mathbb H_n$, we have
$$X_\tth (t) = t^{- |\Psi_{\tth}|} \sum_{\la \in \tP^+_n} s_\la \cdot t^{i/2} \dim \, H^{i} ( V^\la_{\Psi_\tth} )\in \La,$$
where $V^\la_{\Psi_\tth}$ is borrowed from Corollary \ref{cor:dec}.
\end{thm}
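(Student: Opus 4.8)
The plan is to verify that the right-hand side of the claimed formula, viewed as a function $\mathbb{H} \to \Lambda$, satisfies the three characterizing properties of Theorem~\ref{thm:AN}, and then invoke uniqueness. Set $Y_\tth(t) := t^{-|\Psi_\tth|}\sum_{\la \in \tP_n^+} s_\la \cdot \gdim\, H^\bullet(V^\la_{\Psi_\tth})$, where the Schur functions $s_\la$ are understood in $\La^{(n)}$ via Theorem~\ref{thm:lusztig}(1), and we must argue that the truncation does not lose information (since all partitions have length $\le n$ in the relevant range, and the number of vertices controls the number of colors, $\La^{(n)}$ is enough). By Corollary~\ref{cor:dec} the graded vector spaces $V^\la_{\Psi_\tth}$ are the multiplicity spaces in the decomposition of $(\mathsf{m}_{\Psi_\tth})_*\C_{\mathscr X_{\Psi_\tth}}$, so $Y_\tth(t)$ is genuinely a cohomological invariant of the geometry.

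\textbf{Normalization (property (3)).} When $\Gamma_\tth$ is the complete graph, $\Psi_\tth = \emptyset$ is empty, so $\mathscr X_{\Psi_\tth}$ is a point (or more precisely $T^*_\emptyset X = X = G/B$ sits inside, and the fibers of $\mathsf{m}$ are tracked by the $\tbG_i$'s which in this case are all of $\tbG$). Here I would compute directly that $(\mathsf{m}_\Psi)_*\C \cong \bigoplus_\la V^\la \boxtimes \mathsf{IC}_\la$ recovers, via Theorem~\ref{thm:lusztig}, the expansion $[n]_t! \cdot e_n$. Concretely, $e_n = s_{(1^n)}$ and the fiber over the relevant point should be a full flag variety $G/B$ whose Poincaré polynomial is $[n]_t!$; combined with $\mathsf{IC}_{(1^n)}$ corresponding to $e_n$ and $|\Psi_\tth| = 0$, this gives property (3). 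This is the base case and should be a short explicit check.

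\textbf{Factorization (property (2)).} If $\Gamma_\tth = \Gamma_{\tth_0} \sqcup \Gamma_{\tth_1}$ then the root ideal $\Psi_\tth$ is block-diagonal, so $\gn(\Psi_\tth) = \gn(\Psi_{\tth_0}) \oplus \gn(\Psi_{\tth_1})$ inside $M_{n_0} \oplus M_{n_1} \subset M_n$, and $\mathscr X_{\Psi_\tth}$ should decompose compatibly. The key input is Theorem~\ref{thm:ind}: pushing forward along $\mathsf{m}_{\Psi_\tth}$ and restricting to $\mathcal N$, the decomposition theorem plus the convolution formula (\ref{eqn:LRcoeff}) turns the product of multiplicity spaces into a sum with Littlewood-Richardson coefficients, which matches $s_\la s_\mu = \sum_\nu c^\nu_{\la\mu} s_\nu$. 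Since $|\Psi_\tth| = |\Psi_{\tth_0}| + |\Psi_{\tth_1}|$, the $t$-power prefactors also multiply, giving $Y_\tth(t) = Y_{\tth_0}(t) Y_{\tth_1}(t)$. The care needed is to see that the geometric product $\star$ on $\mathcal M$ genuinely computes the pushforward from $\mathscr X_{\Psi_\tth}$; I would use Theorem~\ref{thm:X-Psi}(2) to reduce to the nilpotent cone and then identify the convolution diagram with $\mu_P$.

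\textbf{Modular law (property (1)) --- the main obstacle.} This is where the real work lies. Given $\Psi_0 \subsetneq \Psi \subsetneq \Psi_1$ with $|\Psi_0|+1 = |\Psi| = |\Psi_1|-1$, both $\Psi_0, \Psi_1$ being $s_i$-stable, and the combinatorial configuration on roots $\al_{i,j}, \al_{i+1,j}$ (or the transposed version), I must produce the relation $(1+t)(\mathsf{m}_{\Psi})_*\C \leftrightarrow (\mathsf{m}_{\Psi_0})_*\C \oplus t\,(\mathsf{m}_{\Psi_1})_*\C$ at the level of multiplicity spaces. The natural strategy: the inclusions $\mathscr X_{\Psi_0} \subset \mathscr X_\Psi \subset \mathscr X_{\Psi_1}$ from Theorem~\ref{thm:X-Psi}(1), together with the fact that $\Psi$ differs from $\Psi_0$ (resp.\ $\Psi_1$ differs from $\Psi$) by a single root, exhibit $T^*_\Psi X$ as a rank-one affine bundle modification, and the $s_i$-stability lets one run a $\P^1$-bundle / blow-up argument à la the standard proof that Hessenberg varieties satisfy the modular law (cf.\ Abreu-Nigro, Brosnan-Chow). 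I expect one realizes a $\mathbb{P}^1$-family interpolating between the three root ideals, with the two $\tbB$-stable sections corresponding to $\Psi_0$ and $\Psi_1$; applying proper pushforward and the decomposition theorem along this family, the long exact sequence (or the Leray-Hirsch-type splitting) yields exactly $(1+t)\mathsf{IC}\text{-multiplicities}$ on one side and the sum of the two boundary contributions on the other, once the $t$-shifts from the prefactors $t^{-|\Psi_\bullet|}$ are accounted for. The delicate points will be: (a) checking the $s_i$-stability genuinely produces a group action (a lift of $s_i$ to the relevant $\tbP_i$ or to an $SL_2$) under which the geometry is equivariant, so that a clean $\P^1$-bundle structure appears rather than just a set-theoretic family; (b) ensuring the decomposition theorem pieces match up summand-by-summand, i.e.\ that no unexpected extra $\mathsf{IC}$-summands appear or cancel --- this should follow from semisimplicity and the parity/purity of these spaces, but must be argued; and (c) tracking the cohomological degree shifts precisely so that the coefficient is $(1+t)$ and not, say, $(1+t^{1/2})$ or $t(1+t)$. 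Once all three properties are in hand, Theorem~\ref{thm:AN} gives $Y_\tth(t) = X_\tth(t)$, which is the assertion.
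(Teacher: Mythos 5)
Your overall strategy coincides with the paper's: name the right-hand side $Y_\tth(t)$ and verify the three axioms of Theorem \ref{thm:AN}. The normalization and factorization steps are in the same spirit as the paper's proof. But the step you yourself identify as the crux --- the modular law --- is not actually established in your proposal: you outline a hoped-for $\P^1$-family interpolating between $\Psi_0,\Psi,\Psi_1$ and list the delicate points (equivariance of the $s_i$-action, matching of summands under the decomposition theorem, degree bookkeeping) without resolving any of them, and it is not clear that such a family exists on the compactified spaces $\mathscr X_{\Psi_\bullet}$ over $\Gr$ where you propose to work. The paper does not reprove the modular law at all: it passes to the Springer-type sheaves $\mathscr S_\Psi = (\mu_\Psi)_*\C_{T^*_\Psi X}$ on the nilpotent cone (this reduction is legitimate because $\tbG \cdot T^*_\Psi X = \mathscr X_\Psi$ by Corollary \ref{cor:Pdec} and $\mathsf{IC}_\la|_{\mathcal N} \cong \mathsf{IC}^{\mathrm{Spr}}_\la$ by Theorem \ref{thm:lusztig}(3), so the multiplicity spaces $V^\la_\Psi$ are the same) and then quotes the known isomorphism $\mathscr S_{\Psi} \oplus \mathscr S_{\Psi}[-2] \cong \mathscr S_{\Psi_0}[-2] \oplus \mathscr S_{\Psi_1}$ from \cite[Proposition 6.7]{SP}, from which the relation $(1+t)\,t^{-|\Psi|}\gdim V^\la_\Psi = t^{-|\Psi_0|}\gdim V^\la_{\Psi_0} + t\cdot t^{-|\Psi_1|}\gdim V^\la_{\Psi_1}$ is immediate. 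Either cite that input or supply the $\P^1$-fibration argument in full; as written, the modular law is a genuine gap.

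There is also a concrete error in your factorization step: for $\Gamma_\tth = \Gamma_{\tth_0}\sqcup\Gamma_{\tth_1}$ the root ideal $\Psi_\tth$ contains, besides the two block-diagonal copies, the entire off-diagonal block of $n_1 n_2$ roots, so $|\Psi_\tth| = |\Psi_{\tth_0}| + |\Psi_{\tth_1}| + n_1 n_2$, not $|\Psi_{\tth_0}| + |\Psi_{\tth_1}|$ as you assert. The prefactors therefore do \emph{not} simply multiply; the discrepancy $t^{-n_1 n_2}$ is cancelled precisely by the shift $[2 n_1 n_2]$ that enters when one identifies $(\mu_{\Psi_\tth})_*\C$ with the induction product $\mathsf{IC}^{\mathrm{Spr}}_\la \star \mathsf{IC}^{\mathrm{Spr}}_\mu$ via the smooth base change along the affine fibration of relative dimension $n_1 n_2$ (this is exactly the bookkeeping the paper carries out). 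Your argument as stated only appears to close because two omissions cancel. Minor further slips: for the complete graph $\Psi_\tth = \emptyset$ gives $\mathscr X_\Psi = X = G/B$ (not a point --- the image of $\mathsf{m}_\Psi$ is a point and the fiber is $X$), which is what produces $V^{(1^n)}_\Psi = H^\bullet(X,\C)$ of graded dimension $[n]_t!$; and no truncation issue arises in the statement since every $\la \in \tP^+_n$ is a partition of $n$, hence of length $\le n$.
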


\begin{ex}\label{ex:SW}
Consider $\tth \in \mathbb H_3$ given as $\tth (1)=2,\tth(2)=\tth(3)=3$. We have
$$\sum_{\la \in \tP^+_n,i\in \Z} s_\la \cdot t^{i/2} \dim \, H^{i} ( V^\la_{\Psi_\tth} ) = t^2s_{21}+t^2(t^{-1}+2+t) s_{1^3}.$$
This coincide with the chromatic quasisymmetric function $t s_{21}+(1+2t+t^2) s_{1^3}$ borrowed from \cite[Example 3.2]{SW16} up to a grading shift.
\end{ex}

\begin{rem}
In our proof of Theorem \ref{thm:main}, we use \cite{AB21} and \cite[\S 6]{SP}. Still, the whole proof is independent of \cite{BC18,AB22} and other references after \cite{SW16}.
\end{rem}

We recover some well-known properties of chromatic quasisymmetric functions:
\begin{cor}[Shareshian-Wachs \cite{SW16} Theorem 6.3 and Corollary 2.8]\label{cor:qs}
For each Hessenberg function $\tth \in \mathbb H_n$, the function $X_\tth (t)$ is Schur-positive and has palindromic coefficients.
\end{cor}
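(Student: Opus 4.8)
The plan is to read off both statements from Theorem~\ref{thm:main}, using only the uniqueness of the decomposition in Corollary~\ref{cor:dec} and Verdier duality on the smooth projective variety $\mathscr X_{\Psi_\tth}$; no further combinatorics should be needed.

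For Schur-positivity, I would note that Theorem~\ref{thm:main} exhibits the coefficient of $s_\la$ in $X_\tth(t)$ as $t^{-|\Psi_\tth|}\gdim H^\bullet(V^\la_{\Psi_\tth}) = \sum_i t^{i/2 - |\Psi_\tth|}\dim H^i(V^\la_{\Psi_\tth})$, a manifestly non-negative $\R_{\ge 0}$-combination of powers of $t^{1/2}$. On the other hand $X_\tth(t)$ lies in the $\Z_{\ge 0}[t]$-span of the monomial symmetric functions, being the ascent generating function over the proper colorings of $\Gamma_\tth$ (this series is symmetric because $\Gamma_\tth$ is a unit interval graph), so its coefficients in the Schur basis lie in $\Z[t]$. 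An element of $\Z[t]$ that is a non-negative real combination of powers of $t^{1/2}$ lies in $\Z_{\ge 0}[t]$, which gives Schur-positivity.

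For the palindromy, I would first record that, by Theorem~\ref{thm:X-Psi}(1), $\mathscr X_{\Psi_\tth}$ contains $T^*_{\Psi_\tth}X = G \times^B \gn(\Psi_\tth)$ as a dense open subset, so it is smooth projective of dimension $d := \dim G/B + \dim \gn(\Psi_\tth) = \binom{n}{2} + |\Psi_\tth|$. Hence $\C_{\mathscr X_{\Psi_\tth}}$ is Verdier self-dual up to the shift $[2d]$, and since $\mathsf{m}_{\Psi_\tth}$ is proper its pushforward commutes with Verdier duality, so $(\mathsf{m}_{\Psi_\tth})_*\C_{\mathscr X_{\Psi_\tth}}$ is self-dual up to $[2d]$. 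Applying Verdier duality to the decomposition of Corollary~\ref{cor:dec}, using that each $\mathsf{IC}_\la$ is Verdier self-dual in its geometric Satake normalization, and matching the $\mathsf{IC}_\la$-isotypic parts (the $V^\la_{\Psi_\tth}$ being uniquely determined), I obtain $V^\la_{\Psi_\tth}[2d] \cong \bD V^\la_{\Psi_\tth}$, that is, $\dim H^j(V^\la_{\Psi_\tth}) = \dim H^{2d-j}(V^\la_{\Psi_\tth})$ for all $j$ and all $\la$. Thus $\gdim H^\bullet(V^\la_{\Psi_\tth})$ is a palindrome in $t$ centered at $d/2$, and the factor $t^{-|\Psi_\tth|}$ shifts the center of the $s_\la$-coefficient of $X_\tth(t)$ to $d/2 - |\Psi_\tth| = (\binom{n}{2} - |\Psi_\tth|)/2 = e(\Gamma_\tth)/2$, where $e(\Gamma_\tth) = |\Delta^+ \setminus \Psi_\tth|$ is the number of edges of $\Gamma_\tth$. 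As this center is independent of $\la$, the coefficients of $X_\tth(t)$ are palindromic, which is the Shareshian--Wachs statement.

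The only real obstacle I foresee is the bookkeeping of cohomological shifts: I must use the self-dual (perverse) normalization of $\mathsf{IC}_\la$, so that Verdier duality contributes the \emph{same} shift $[2d]$ for every $\la$ --- any other normalization would introduce a $\la$-dependent $\dim \bO_\la$ and destroy the uniformity of the palindromy center --- and I must combine that global shift with the prefactor $t^{-|\Psi_\tth|}$ and the value $\dim \mathscr X_{\Psi_\tth}$ correctly to land exactly on $e(\Gamma_\tth)/2$. The one genuinely non-formal ingredient is the a priori membership of $X_\tth(t)$ in the $\Z_{\ge 0}[t]$-span of the $m_\la$ that is used for Schur-positivity; alternatively one could avoid it by establishing directly that $V^\la_{\Psi_\tth}$ has cohomology only in even degrees (via the torus actions on $\mathscr X_{\Psi_\tth}$ and the parity of $\Gr$), which would additionally turn the above powers of $t^{1/2}$ into honest integer powers of $t$.
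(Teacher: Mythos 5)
Your proposal is correct and takes essentially the same route as the paper: Schur-positivity is read off from the decomposition of Corollary \ref{cor:dec} via Theorem \ref{thm:main}, and palindromicity comes from Verdier self-duality of $\C_{\mathscr X_{\Psi_\tth}}$ up to the shift $[2\dim \mathscr X_{\Psi_\tth}]$, the properness of $\mathsf{m}_{\Psi_\tth}$, and $\mathbb D\,\mathsf{IC}_\la \cong \mathsf{IC}_\la$, yielding $(V^\la_{\Psi_\tth})^* \cong V^\la_{\Psi_\tth}[2\dim \mathscr X_{\Psi_\tth}]$ uniformly in $\la$. Your additional bookkeeping (identifying the palindromy center as $e(\Gamma_\tth)/2$ and the $\Z[t]$-membership argument excluding half-integer powers of $t$) only makes explicit what the paper leaves implicit.
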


\begin{proof}
The Schur positivity is a direct consequence of Corollary \ref{cor:dec}. The Verdier duality $\mathbb D$ commutes with the proper pushforward $( \mathsf{m}_\Psi )_* = ( \mathsf{m}_\Psi )_!$ \cite[(8.3.13)]{CG97}, and hence we have
$$\mathbb D \left( ( \mathsf{m}_\Psi )_* \mathbb C_{\mathscr X _{\Psi_\tth}} \right) \cong ( \mathsf{m}_\Psi )_* \mathbb D \mathbb C_{\mathscr X _{\Psi_\tth}} \cong ( \mathsf{m}_\Psi )_* \mathbb C_{\mathscr X _{\Psi_\tth}}[2 \dim \, \mathscr X_{\Psi_\tth}] \hskip 6mm \text{by \cite[(8.3.8)]{CG97}.}$$
In view of $\mathbb D \mathsf{IC}_\lambda \cong \mathsf{IC}_\lambda$, we deduce $( V_{\Psi_{\tth}}^\lambda )^* \cong V_{\Psi_{\tth}}^\lambda [2 \dim \, \mathscr X_{\Psi_\tth}]$, which implies
$$\gdim \, ( V_{\Psi_{\tth}}^\lambda )^* = t^{-\dim \mathscr X_{\Psi_\tth}} \gdim \, V_{\Psi_{\tth}}^\lambda$$
for each $\lambda \in \tP^+_n$ as required.
\end{proof}
We set $X_\tth := \pi_n( X_\tth(1) )$ for each $\tth \in \mathbb H_n$. For each $0 \le h < i \le n$, we set
$$\mathtt{S}_{i,h} := (s_{h-1} \cdots s_0 \cdots s_{i+1} s_i + s_{h-2} \cdots s_0 \cdots s_i + \cdots + s_{i+1}s_i+ s_i + 1 ) \in \C \tSym_n,$$
where the indices are understood to be modulo $n$, and $\mathtt{S}_{i,h}$ has $(n-i+h+1)$-terms.

\begin{thm}\label{thm:alg}
For each Hessenberg function $\tth \in \mathbb H_n$ corresponding to the root ideal $\Psi$, we form
$$\widetilde{X}_\tth (q) := \mathtt{S}_{1,h_1(\Psi)} \cdot \mathtt{S}_{2,h_2(\Psi)} \cdots \mathtt{S}_{n,h_n(\Psi)} e^{\varLambda_n} \in \C \tP_\af \hskip 5mm \text{where we set} \hskip 3mm q = e^{- \delta}.$$
By identifying $e^{\varepsilon_i}$ with $X_i$ for $1 \le i \le n$, we have
$$\widetilde{X}_\tth (q) \MID_{q=1,e^{\varLambda_0} = 1} = X_\tth \in \La^{(n)}.$$
In addition, the coefficient of the monomial $X^{\gamma}$ in $X_\tth$ is 
\begin{equation}
\dim \, H^{\bullet} ( \imath_{\gamma}^* (\mathsf{m}_\Psi)_* \C ) =  | \mathsf{m}_\Psi^{-1} ([\gamma])^{( T \times \Gm )} | \hskip 5mm \text{for each} \hskip 3mm \gamma \in \tP_n.\label{eqn:coeff}
\end{equation}
\end{thm}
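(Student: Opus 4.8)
The plan is to deduce Theorem~\ref{thm:alg} from Theorem~\ref{thm:main} (equivalently Theorem~\ref{fthm:real}) by running the geometric content through the localization theorem on $\Gr$, and to obtain the combinatorial operators $\mathtt{S}_{i,h}$ by unwinding the inductive construction (\ref{eqn:ind-def}) of $\mathscr X_\Psi$ one stratum at a time. First I would establish (\ref{eqn:coeff}): by Corollary~\ref{cor:dec} we have $(\mathsf{m}_\Psi)_*\C_{\mathscr X_\Psi} \cong \bigoplus_\la V^\la_\Psi \boxtimes \mathsf{IC}_\la$, so $\dim\,H^\bullet(\imath_\gamma^*(\mathsf{m}_\Psi)_*\C) = \sum_\la \dim H^\bullet(V^\la_\Psi)\cdot \dim H^\bullet(\imath_\gamma^*\mathsf{IC}_\la)$; by Theorem~\ref{thm:lusztig}(1) this is exactly the coefficient of $X^\gamma$ in $\sum_\la s_\la \dim H^\bullet(V^\la_\Psi)$, which by Theorem~\ref{thm:main} (set $t=1$ and apply $\pi_n$) is the coefficient of $X^\gamma$ in $X_\tth$. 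The second equality in (\ref{eqn:coeff}) is then the classical statement that $\mathsf{m}_\Psi$ is a suitably equivariant proper map between varieties with $(T\times\Gm)$-actions having isolated fixed points over $[\gamma]$, so that the stalk Euler characteristic of the constant sheaf equals the number of fixed points in the fiber; since $\mathscr X_\Psi$ is smooth projective with a torus action and the pushforward is (by the palindromy in Corollary~\ref{cor:qs}) pure, all stalk cohomology is even and the count is honest, not merely an alternating sum.

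Next I would produce $\widetilde X_\tth(q)$. The idea is that each stratum $\mathscr X^{(j)}_\Psi = \tbG_j([\bv_{\varLambda_j}]\times \mathscr X^{(j+1)}_\Psi)$ is fibered over $\mathscr X^{(j+1)}_\Psi$ (after pushing to the relevant partial flag variety) with fiber $\tbG_j/\tbB$, a finite-dimensional Schubert-type variety whose $(T\times\Gm)$-fixed points are indexed by minimal-length coset representatives; the generator $\tbG_j$ is built from $\tbP_i,\ldots,\tbP_{n-1}$ together with $\tbP_0,\ldots,\tbP_{h_j(\Psi)-1}$, and the corresponding Demazure-type operator on $\C\tP_\af$ is precisely the telescoping sum $\mathtt{S}_{j,h_j(\Psi)}$. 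Concretely, the pushforward $(\mathsf{m}_\Psi)_*\C_{\mathscr X_\Psi}$ restricted to fixed points is computed by iterating: $e^{\varLambda_n}$ records the base point, and applying $\mathtt{S}_{n,h_n}, \mathtt{S}_{n-1,h_{n-1}},\ldots,\mathtt{S}_{1,h_1}$ in turn accounts for the successive $\tbG_j$-spreads, the $\Gm^\ro$-weights being tracked by $q=e^{-\delta}$. Specializing $q=1$ and $e^{\varLambda_0}=1$ (i.e. applying $\mathsf{pr}$) collapses the affine combinatorics onto $\C[X_1^{\pm1},\ldots,X_n^{\pm1}]$, and the resulting character is $\sum_\gamma X^\gamma |\mathsf{m}_\Psi^{-1}([\gamma])^{(T\times\Gm)}|$, which by the previous paragraph equals $X_\tth$.

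The main obstacle is the middle step: rigorously identifying the operator induced on equivariant cohomology (or on the fixed-point character) by the geometric operation $Y \mapsto \tbG_j([\bv_{\varLambda_j}]\times Y)$ with the explicit element $\mathtt{S}_{j,h_j(\Psi)}\in\C\tSym_n$. This requires (a) checking that the map from $\tbG_j\times^{\tbB}([\bv_{\varLambda_j}]\times Y)$ down to the product of projective spaces in (\ref{eqn:ind-def}) is birational onto its image — so that the constant sheaf pushes forward without extra summands at this stage, the non-triviality of the decomposition being concentrated in $\mathsf{m}_\Psi$ itself — and (b) matching the Bott--Samelson-style fixed-point bookkeeping for the specific word $s_{h_j-1}\cdots s_0\cdots s_{i+1}s_i$ (indices mod $n$) against the generators $\tbP_i,\ldots,\tbP_{n-1},\tbP_0,\ldots,\tbP_{h_j-1}$ defining $\tbG_j$, including that $h_j(\Psi)<j$ guarantees $\tbG_j/\tbB$ is finite-dimensional so these are genuine (not infinite) sums. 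A secondary point to handle with care is the degree/grading shift $t^{-|\Psi_\tth|}$ from Theorem~\ref{thm:main} versus the plain $q=1$ specialization here: at $t=1$ the shift is invisible, so it does not affect (\ref{eqn:coeff}), but one should note this explicitly. Once (a) and (b) are in place, the theorem follows by assembling the $n$ steps and invoking Theorem~\ref{thm:main} and Theorem~\ref{thm:lusztig}.
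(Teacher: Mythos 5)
Your first step coincides with the paper's: combining Corollary \ref{cor:dec}, Theorem \ref{thm:lusztig}(1) and Theorem \ref{thm:main} to identify the coefficient of $X^\gamma$ in $X_\tth$ with $\dim H^{\bullet}(\imath_\gamma^*(\mathsf{m}_\Psi)_*\C)$ is exactly (\ref{eqn:Xtofib})--(\ref{eqn:bc}). The gap is in your justification of the second equality of (\ref{eqn:coeff}). Palindromy (Corollary \ref{cor:qs}) is a symmetry of the graded dimensions of $V^\la_{\Psi_\tth}$ and carries no parity information, and purity of $(\mathsf{m}_\Psi)_*\C$ does not imply that its stalks sit in even degrees; without evenness, the classical fixed-point principle only gives $\chi_c(\mathsf{m}_\Psi^{-1}([\gamma]))=|\mathsf{m}_\Psi^{-1}([\gamma])^{\Gm^1}|$, i.e.\ an alternating sum, not $\dim H^{\bullet}_c$. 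The evenness you need has to come from Lusztig's parity vanishing for the stalks $\imath_\gamma^*\mathsf{IC}_\la$ on $\Gr$ (\cite[(4.2)]{Lus81a}) together with evenness of the multiplicity spaces $V^\la_\Psi$ (which follows from the iterated projective-bundle structure / affine paving of $\mathscr X_\Psi$, not from Corollary \ref{cor:qs}); with those two inputs your local Euler-characteristic argument does close and is even shorter than the paper's. The paper itself never asserts fiberwise evenness: it runs equivariant localization globally, showing $H^{\bullet}_{\Gm^1}(\mathscr X_\Psi)$ is free of rank $|\mathscr X_\Psi^{\Gm^1}|$ (affine paving), that $\mathrm{rank}_R H^{\bullet}_{\Gm^1}(\mathsf{m}_\Psi^{-1}([\gamma]))=\dim H^{\bullet}(\mathsf{m}_\Psi^{-1}([\gamma]))$ by degenerating (\ref{eqn:Leray}) via Lusztig's parity, and that $|\mathsf{m}_\Psi^{-1}([\gamma])^{\Gm^1}|\ge \mathrm{rank}_R H^{\bullet}_{\Gm^1}(\mathsf{m}_\Psi^{-1}([\gamma]))$, so that the global count (\ref{eqn:fixpoints}) squeezes these into equalities for every $\gamma$.

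The more serious gap is that the identification of the fixed-point generating function with $\mathtt{S}_{1,h_1(\Psi)}\cdots\mathtt{S}_{n,h_n(\Psi)}e^{\varLambda_n}$ --- the actual content of the first display of the theorem --- is flagged by you as an ``obstacle'' and left unproved, and the route you sketch for it demands more than is needed. Once (\ref{eqn:coeff}) is established, no sheaf-level statement per stage is required: neither birationality of the intermediate maps in (\ref{eqn:ind-def}) nor an identification of a Demazure-type operator on equivariant cohomology enters; all that remains is set-theoretic bookkeeping of $\Gm^1$-fixed points and their $\tT$-weights. The paper does precisely this: $\mathscr X^{(i)}_\Psi$ fibers over $\tbG_i/(\tbG_i\cap\tbG_{i+1})\cong\P^{\,n-i+h_i(\Psi)}$ with fiber $\mathscr X^{(i+1)}_\Psi$, so its fixed points are the translates of those of $\mathscr X^{(i+1)}_\Psi$ by the coset representatives $\{1,s_i,s_{i+1}s_i,\ldots\}=\Sym^{(i)}/(\Sym^{(i)}\cap\Sym^{(i+1)})$ of (\ref{eqn:refs}), which are literally the group elements summed in $\mathtt{S}_{i,h_i(\Psi)}$; the weight of the fixed point indexed by $(x_1,\ldots,x_n)$ is $e^{x_1\cdots x_n\varLambda_n}$ read modulo $\mathsf{Stab}_{\tSym_n}[\bv_{\varLambda_n}]=\Sym_n$, and summing and specializing $q=1$, $e^{\varLambda_0}=1$ gives the formula. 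To turn your proposal into a proof you must carry out this enumeration (your item (b)), including the identification of the fixed points of the partial affine flag varieties with $\tSym_n$-translates of $[\bv_{\varLambda_j}]$; your item (a) can simply be dropped.
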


\begin{ex}
In the same setting as in Example \ref{ex:SW}, we have
\begin{align*}
\widetilde{X}_\tth (q) &= (s_2s_1+s_1+1)(s_2+1)(s_0+1) e^{\varLambda_3} = (s_2s_1+s_1+1)(s_2+1)( q^{-1} e^{\epsilon_1 + \varLambda_2}+ e^{\varLambda_3})\\
& = (s_2s_1+s_1+1)(q^{-1} e^{2\epsilon_1+\epsilon_2} + q^{-1} e^{2\epsilon_1+\epsilon_3} + 2 e^{\varpi_3}) e^{\varLambda_0} = ( q^{-1}m_{21} + 6 m_{1^3} ) e^{\varLambda_0}.
\end{align*}
\end{ex}

\begin{cor}\label{cor:Hess}
For each $\tth \in \mathbb H_n$, we have a $G$-action on $H^{\bullet} ( \mathscr X_\Psi, \C )$ whose $T$-character is $X_\tth$.
\end{cor}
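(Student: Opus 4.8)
The plan is to produce the $G$-action on $H^\bullet(\mathscr X_\Psi,\C)$ by transporting the Satake-category $\mathop{GL}(n,\C)$-module structure across the decomposition of Corollary \ref{cor:dec}. Recall that under geometric Satake, each $\mathsf{IC}_\la$ (for $\la\in\tP_n^+$) carries a $\mathop{GL}(n,\C)$-action on $H^\bullet(\Gr,\mathsf{IC}_\la)$ whose character is the Schur function $s_\la$; this is $\mathop{GL}(n,\C)$-equivariant in the appropriate sense so that the decomposition $(\mathsf{m}_\Psi)_*\C_{\mathscr X_\Psi}\cong\bigoplus_{\la}V^\la_\Psi\boxtimes\mathsf{IC}_\la$ yields, upon taking hypercohomology over $\Gr$, an isomorphism
$$H^\bullet(\mathscr X_\Psi,\C)\cong\bigoplus_{\la\in\tP_n^+}V^\la_\Psi\otimes H^\bullet(\Gr,\mathsf{IC}_\la)$$
of graded vector spaces, where the right-hand side is manifestly a $\mathop{GL}(n,\C)$-module (with $V^\la_\Psi$ a multiplicity space on which $G$ acts trivially). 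Restricting this $G=\mathop{GL}(n,\C)$-module structure gives the desired $G$-action on $H^\bullet(\mathscr X_\Psi,\C)$.

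The remaining point is to identify the $T$-character. By the previous display, the $T$-character of $H^\bullet(\mathscr X_\Psi,\C)$ equals $\sum_{\la\in\tP_n^+}(\dim V^\la_\Psi)\cdot \mathrm{ch}_T H^\bullet(\Gr,\mathsf{IC}_\la)$. Now one uses Theorem \ref{thm:lusztig}: part (1) together with the $\Sym_n$-symmetry used in its proof shows $\mathrm{ch}\,H^\bullet(\Gr,\mathsf{IC}_\la)=\sum_{\gamma\in\tP_n}X^\gamma\dim H^\bullet(\imath_\gamma^*\mathsf{IC}_\la)=s_\la\in\La^{(n)}$, matching the Schur function. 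Hence the $T$-character of $H^\bullet(\mathscr X_\Psi,\C)$ is $\sum_{\la}(\dim V^\la_\Psi)\,s_\la$, which by Theorem \ref{thm:main} evaluated at $t=1$ (up to the overall power $t^{-|\Psi_\tth|}$, which disappears on specializing $t=1$) is exactly $X_\tth(1)$, and applying $\pi_n$ gives $X_\tth\in\La^{(n)}$. This is also consistent with the coefficient formula (\ref{eqn:coeff}) in Theorem \ref{thm:alg}, which identifies the coefficient of $X^\gamma$ with $\dim H^\bullet(\imath_\gamma^*(\mathsf{m}_\Psi)_*\C)$; one can alternatively run the argument directly through the stalk computation $\dim H^\bullet(\imath_\gamma^*(\mathsf{m}_\Psi)_*\C)=\sum_\la(\dim V^\la_\Psi)\dim H^\bullet(\imath_\gamma^*\mathsf{IC}_\la)$ and the same Lusztig input.

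The main obstacle is making the $G$-action genuinely well-defined and natural rather than merely a choice of basis: one must invoke that the geometric Satake functor is monoidal and that $H^\bullet(\Gr,-)$ on the Satake category carries a canonical $\mathop{GL}(n,\C)$-action compatible with morphisms in $D^b_c(\Gr)$, so that the (non-canonical, but functorially-induced-on-isotypic-pieces) decomposition of Corollary \ref{cor:dec} transports the action unambiguously onto the isotypic decomposition of $H^\bullet(\mathscr X_\Psi,\C)$. Equivalently, and perhaps more cleanly, one observes that $(\mathsf{m}_\Psi)_*\C_{\mathscr X_\Psi}$ lies (after forgetting the grading / as an object built from Satake summands) in the essential image of the Satake category, so that $H^\bullet(\mathscr X_\Psi,\C)=H^\bullet(\Gr,(\mathsf{m}_\Psi)_*\C_{\mathscr X_\Psi})$ inherits a $\mathop{GL}(n,\C)$-module structure functorially; the $T$-character statement is then immediate from the Satake dictionary and Theorem \ref{thm:lusztig}(1). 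Everything else is bookkeeping with characters and the specialization $t=1$.
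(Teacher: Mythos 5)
Your proposal is correct and follows essentially the same route as the paper: take hypercohomology of the decomposition in Corollary \ref{cor:dec}, use the geometric Satake correspondence \cite{MV07} to put the $G$-module structure on each $H^{\bullet}(\mathsf{IC}_\la)$ with character $s_\la$, and identify the resulting $T$-character $\sum_\la (\dim V^\la_\Psi)\, s_\la$ with $X_\tth$. The only cosmetic difference is that you invoke Theorem \ref{thm:main} at $t=1$ (plus $\pi_n$) for the last identification, whereas the paper phrases it as a comparison of (\ref{eqn:coeff}) with Theorem \ref{thm:lusztig}; these amount to the same computation.
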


\begin{proof}
By Corollary \ref{cor:dec}, we have $H^{\bullet} ( \mathscr X_\Psi, \C ) \cong \bigoplus_{\la \in \tP_{n}^+} V_{\Psi}^\la \boxtimes H^{\bullet} ( \mathsf{IC}_\la )$. The $G$-action exists on its RHS by \cite{MV07}, and the $T$-character of $H^{\bullet} ( \mathscr X_\Psi, \C )$ is $X_\tth$ by the comparison of (\ref{eqn:coeff}) and Theorem \ref{thm:lusztig}.
\end{proof}

\begin{cor}[see e.g. \cite{DK22} Corollary 11.6]\label{cor:EP}
For each unit interval graph $\Gamma$ with $n$-vertices, we have a root ideal $\Psi$ whose Hessenberg function $\tth$ satisfies $\Gamma = \Gamma_\tth$. The Euler-Poincar\'e characteristic $\chi ( \mathscr X_\Psi )$ of $\mathscr X_\Psi$ is the number of proper colorings of $\Gamma$ with $n$-colors. Explicitly, we have
\begin{equation}
\left. X_{\tth} \right|_{X_1 =\cdots = X_n = 1} = \dim \, H^{\bullet} ( \mathscr X_\Psi, \C ) = \chi ( \mathscr X_\Psi ) = \prod_{j=1}^n (n+1-j+h_j(\Psi)).\label{eqn:EP}
\end{equation}
\end{cor}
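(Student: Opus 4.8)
The plan is to derive Corollary~\ref{cor:EP} as a specialization of Theorem~\ref{thm:alg}, together with a direct geometric computation of $\chi(\mathscr X_\Psi)$ from the inductive construction (\ref{eqn:ind-def}). First I would record the three equalities in (\ref{eqn:EP}) separately. The leftmost, $X_\tth|_{X_1=\cdots=X_n=1}$ equals the number of proper $n$-colorings of $\Gamma_\tth$, is the classical specialization of Stanley's chromatic symmetric function (each monomial $X^\gamma$ of $X_\tth$ counts, with multiplicity, a family of proper colorings using the color multiset $\gamma$, so setting all $X_i=1$ recovers $\chi_{\Gamma_\tth}(n)$, the chromatic polynomial evaluated at $n$); this is independent of the rest of the paper but can also be read off from the combinatorial description of $X_\tth$ via Theorem~\ref{thm:AN}. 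The second equality, $\dim H^{\bullet}(\mathscr X_\Psi,\C) = X_\tth|_{X_i=1}$, is immediate from Corollary~\ref{cor:Hess}: the $T$-character of $H^\bullet(\mathscr X_\Psi,\C)$ is $X_\tth$, and evaluating a $T$-character at the identity of $T$ (i.e. $X_i=1$) yields the total dimension. The third, $\dim H^\bullet(\mathscr X_\Psi,\C)=\chi(\mathscr X_\Psi)$, holds because $\mathscr X_\Psi$ is smooth projective with a torus action having isolated fixed points (it is paved by affines, being built by iterated $\tbG_j/\tbB$-bundles as in (\ref{eqn:ind-def})), so its cohomology is concentrated in even degrees and $\dim H^\bullet = \sum_i \dim H^{2i} = \chi(\mathscr X_\Psi)$.

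The remaining content is the product formula $\chi(\mathscr X_\Psi)=\prod_{j=1}^n (n+1-j+h_j(\Psi))$. Here I would argue by downward induction on $j$ using (\ref{eqn:ind-def}). The key observation is that $\mathscr X_\Psi^{(j)} \to \mathscr X_\Psi^{(j+1)}$ is a fiber bundle (Zariski-locally trivial, by the usual Bott--Samelson / iterated $\P^1$-bundle structure underlying (\ref{eqn:ind-def})) whose fiber is $\tbG_j/\tbB \cdot [\bv_{\varLambda_j}]$, a partial affine flag variety of dimension equal to the length of a longest element of the parabolic-quotient associated to the index set $\{i : j\le i<n\}\cup\{i : 0\le i<h_j(\Psi)\}$. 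Counting $(T\times\Gm)$-fixed points on this fiber (equivalently, cells in its affine paving) gives exactly $n+1-j+h_j(\Psi)$: there are $n-j$ indices in the first block and $h_j(\Psi)$ in the second, and the corresponding partial flag variety of $GL$-type has $1 + (n-j) + h_j(\Psi)$ torus fixed points (a $\tbG_j/\tbB$-Schubert cell count, since the quotient by $\tbB$ of the group generated by $n-j+h_j(\Psi)$ simple reflections arranged in a chain is a flag variety of type $\mathsf A_{n-j+h_j(\Psi)}$, which has $n-j+h_j(\Psi)+1$ fixed points). Multiplicativity of Euler characteristics in fiber bundles then yields $\chi(\mathscr X_\Psi)=\prod_{j=1}^n (n+1-j+h_j(\Psi))$.

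Alternatively, and more in the spirit of Theorem~\ref{thm:alg}, one can obtain the product formula purely algebraically: specialize $\widetilde X_\tth(q)|_{q=1,e^{\varLambda_0}=1}=X_\tth$ and then set all $X_i=1$. Under $X_i\mapsto 1$ each simple reflection $s_i$ acts as the identity on the relevant one-dimensional space of specialized characters, so each operator $\mathtt{S}_{i,h}$ acts as multiplication by its number of terms, which is $n-i+h_i(\Psi)+1$; hence $X_\tth|_{X_i=1}=\prod_{j=1}^n(n+1-j+h_j(\Psi))$ directly. I would present this as the cleanest route and note the geometric bundle argument as the conceptual reason. The main obstacle, and the step I would be most careful about, is the compatibility of the specialization $e^{\varepsilon_i}\mapsto 1$ (i.e. $X_i\mapsto 1$) with the $\tSym_n$-action used to define the $\mathtt{S}_{i,h}$: one must check that after the projection $\mathsf{pr}$ the reflections $s_i$ ($1\le i<n$) fix the all-ones specialization and that the affine reflection $s_0$ (entering $\mathtt{S}_{i,h}$ when $h>0$) also does so after $e^{\varLambda_0},e^\delta\mapsto 1$, so that each summand of $\mathtt{S}_{i,h}$ contributes exactly $1$; granting this, the count of terms in $\mathtt{S}_{i,h}$ — stated right after its definition to be $(n-i+h+1)$ — gives the formula. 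I would also double-check the trivial base contribution $e^{\varLambda_n}\mapsto 1$ and the overall normalization $t^{-|\Psi|}\mapsto 1$ at $t=1$, which are routine.
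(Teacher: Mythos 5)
Your proposal is essentially correct, and most of it runs parallel to the paper's proof. The equality $X_\tth|_{X_1=\cdots=X_n=1}=\dim H^{\bullet}(\mathscr X_\Psi,\C)$ is obtained in the paper directly from (\ref{eqn:coeff}) together with Theorem \ref{thm:lusztig} and Corollary \ref{cor:dec}; routing it through Corollary \ref{cor:Hess} as you do is the same computation, since that corollary is proved from exactly these ingredients (and your remark that the all-ones specialization counts proper $n$-colorings is the standard fact implicit in the statement). Where you genuinely diverge is the product formula: the paper proves it geometrically, by observing that $\mathscr X_\Psi^{(j)}$ fibers over the projective space $\P^{n-j+h_j(\Psi)}$ with fiber $\mathscr X_\Psi^{(j+1)}$ and running a Leray spectral sequence induction, which simultaneously yields $H^{\mathrm{odd}}(\mathscr X_\Psi^{(j)},\C)=0$ (hence $\chi=\dim H^{\bullet}$) and the factorization of $\dim H^{\bullet}$. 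Your preferred route---specializing $\widetilde X_\tth(q)$ at $q=1$, $e^{\varLambda_0}=1$, $X_i=1$ and noting that this specialization sends every $e^{\mu}$, $\mu\in\sP_\af$, to $1$, so that each $\mathtt{S}_{j,h_j(\Psi)}$ contributes exactly its number of terms $n+1-j+h_j(\Psi)$---is a valid and arguably cleaner derivation of $X_\tth|_{X_i=1}=\prod_j(n+1-j+h_j(\Psi))$ that the paper does not use; combined with $\chi(\mathscr X_\Psi)=\dim H^{\bullet}(\mathscr X_\Psi,\C)$, which you get from the ($\Gm^1$-stable) affine paving asserted in the proof of Theorem \ref{thm:alg}, it gives all of (\ref{eqn:EP}).

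Two slips in your geometric variant should be corrected, though they do not affect your main (algebraic) route. First, the bundle goes the other way: there is in general no morphism $\mathscr X_\Psi^{(j)}\to\mathscr X_\Psi^{(j+1)}$, because $\mathscr X_\Psi^{(j+1)}$ is $\tbG_{j+1}$-stable but not $\tbG_j$-stable, so the projection to the factors $\P(L(\varLambda_k))$, $k>j$, has image $\tbG_j\cdot\mathscr X_\Psi^{(j+1)}$, which can be strictly larger. The correct statement, and the one the paper uses, is that projection to $\P(L(\varLambda_j))$ exhibits $\mathscr X_\Psi^{(j)}$ as a fibration over the orbit $\tbG_j[\bv_{\varLambda_j}]$ with fiber $\mathscr X_\Psi^{(j+1)}$. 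Second, your fixed-point count of that base is right but your justification is not: the quotient $\tbG_j/\tbB$ associated with the chain of $n-j+h_j(\Psi)$ simple reflections is a full flag variety of type $\mathsf A_{n-j+h_j(\Psi)}$, which has $(n-j+h_j(\Psi)+1)!$ torus-fixed points, not $n-j+h_j(\Psi)+1$. The relevant variety is instead the orbit of the highest weight line $[\bv_{\varLambda_j}]$, whose stabilizer is the maximal parabolic omitting $s_j$; since $s_j$ sits at an end of the chain, this orbit is $\P^{n-j+h_j(\Psi)}$, with Euler characteristic and fixed-point count $n+1-j+h_j(\Psi)$. With these corrections your geometric argument coincides with the paper's induction; as written, the operator-counting specialization is the complete and correct part of your proposal.
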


\begin{proof}
By Corollary \ref{cor:dec} and Theorem \ref{thm:lusztig}, we deduce
\begin{align*}
\text{LHS } \text{of (\ref{eqn:EP})} \stackrel{\small (\ref{eqn:coeff})}{=} & \sum_{\gamma} \dim \, H^{\bullet} ( \imath_{\gamma}^* (\mathsf{m}_\Psi)_* \C )  = \sum_{\la, \gamma} \dim \, V_\Psi^\la \boxtimes H^{\bullet} ( \imath_{\gamma}^* \mathsf{IC}_\la )\\
& = \sum_{\la} \dim \, V_\Psi^\la \boxtimes H^{\bullet} ( \mathsf{IC}_\la ) = \dim \, H^{\bullet} ( \bigoplus_\la V_\Psi^\la \boxtimes\mathsf{IC}_\la )\\
& = \dim \, H^{\bullet} ( (\mathsf{m}_\Psi)_* \C ) = \dim \, H^{\bullet} ( \mathscr X_\Psi, \C ).
\end{align*}
We have $H^{\mathrm{odd}} ( \mathbb P^{\bullet}, \C ) \equiv 0$ and $\chi ( \mathbb P^m) = m+1$. Since $\mathscr X_\Psi^{(j)}$ is a fibration over $\P^{n-j+h_j (\Psi)}$ with its fibers $\mathscr X_\Psi^{(j+1)}$ ($1 \le j \le n$), we find $H^{\mathrm{odd}} ( \mathscr X_\Psi^{(j)}, \C ) \equiv 0$ and
\[
	\chi ( \mathscr X_\Psi^{(j)} ) = \dim \, H^{\bullet} ( \mathscr X_\Psi^{(j)}, \C ) = (n+1-j+h_j(\Psi))  \dim \, H^{\bullet} ( \mathscr X_\Psi^{(j+1)}, \C )
\]
for $1 \le j \le n$ by induction using the degeneration of the Leray spectral sequences from the base case $\mathscr X_\Psi^{(n+1)} = \mathrm{pt}$ and $H^{\neq 0} ( \mathrm{pt}, \C ) \equiv 0$. This yields
$$\dim \, H^{\bullet} ( \mathscr X_\Psi, \C ) = \chi (\mathscr X_\Psi ) = \prod_{j=1}^n (n+1-j+h_j(\Psi))$$
by induction as desired.
\end{proof}

\subsection{Proof of Theorem \ref{thm:main}}
We set the symmetric function in the statement of Theorem \ref{thm:main} as $X_\tth'(t)$.

For each $\Psi \subset \Delta^+$, we set $\mathscr S_\Psi := ( \mu_\Psi )_*\C _{T^*_\Psi X}$. By Theorem \ref{thm:X-Psi} and Corollary \ref{cor:Pdec}, we have
$$\mathscr S_\Psi \cong \bigoplus_{\la \in \tP^+_n} V^\la_\Psi \boxtimes \mathsf{IC}^{\mathrm{Spr}}_\la \hskip 5mm V^\la_\Psi \in D^b_c(\mathrm{pt}).$$

It suffices to verify that the collection $\{ t^{-|\Psi|} \gdim V^\la_\Psi\}_{\la \in \tP^+_n}$ satisfies the criterion of Theorem \ref{thm:AN}. To this end, we assume that the assertion hold for  $\bigsqcup_{n'< n}\mathbb H_{n'}$. The initial case $n=1$ is trivial.

We first prove the modular law. Assume that $\tth_0,\tth,\tth_1 \in \mathbb H_n$ corresponds to the root ideals $\Psi_0 \subset \Psi \subset \Psi_1 \subset \Delta^+$ that satisfies the condition 1) of Theorem \ref{thm:AN}. By adjusting grading shifts in \cite[Proposition 6.7 and three lines above]{SP}, we have
$$\mathscr S_{\Psi} \oplus \mathscr S_{\Psi} [-2] \cong \mathscr S_{\Psi_0}[-2] \oplus \mathscr S_{\Psi_1}.$$

Since $G.\mathcal N = \Gr^{(n)}$, we deduce
\begin{align*}
(1+t) \gdim V_{\Psi}^\la & = t \gdim V_{\Psi_0}^\la + \gdim V_{\Psi_1}^\la \\
& \Leftrightarrow (1+t) t^{-|\Psi|}\gdim V_{\Psi}^\la = (  t^{-|\Psi_0|} \gdim V_{\Psi_0}^\la ) +  t (  t^{-|\Psi_1|} \gdim V_{\Psi_1}^\la )	
\end{align*}

for each $\la \in \tP^+_n$ in this case. Therefore, we conclude that $X'_\tth(t)$ satisfies the modular law.
\medskip

We verify the factorization law. Assume that $\Gamma_\tth = \Gamma_{\tth_0} \sqcup \Gamma_{\tth_1}$ for $\tth,\tth_0,\tth_1 \in \mathbb H$. We have $n=n_1+n_2$ such that $\tth \in \mathbb H_n$, $\tth_1 \in \mathbb H_{n_1}$, and $\tth_2 \in \mathbb H_{n_2}$. Let $\Psi$ (resp. $\Psi_1,\Psi_2$) be the root ideal corresponding to $\tth$ (resp. $\tth_1,\tth_2$). We have $\tth (i) = \tth_1(i)$ $(i \le n_1)$, or $\tth_2(i-n_1) +n_1$ $(i > n_1)$. Now we use the setting of \S \ref{subsec:ind} with our choices of $n_1, n_2$ and $P$. We have a factorization of $\mu_\Psi$:
$$T^*_\Psi X \stackrel{\mu^\flat_\Psi}{\longrightarrow} G \times^P M_{n_1,n_2} \stackrel{\mu_P}{\longrightarrow} M_n.$$
The restriction of $\mu^\flat_\Psi$ fits into the following Cartesian diagram:
$$
\xymatrix{
T^*_\Psi X & P \times ^B n ( \Psi ) \ar@{_{(}->}[l]\ar[rr]^{\mu^\flat_\Psi |_{(P \times ^B n ( \Psi ))}}\ar[d]_{\widetilde{\eta}} & & M_{n_1,n_2}\ar[d]^{\eta}\\
& T^*_{\Psi_1} X \times T^*_{\Psi_2} X \ar[rr]^{\mu_{\Psi_1} \times \mu_{\Psi_2}}& & M_{n_1} \oplus M _{n_2}},$$
where the vertical map is a smooth (affine) fibration with relative dimension $n_1 n_2$, and the $U$-actions in the bottom line is trivial. Thus, if we have
$$(\mu_{\Psi_1} \times \mu_{\Psi_2} )_* \C \cong \bigoplus_{\la \in \tP^+_{n_1},\mu \in \tP^+_{n_2}} \left( V_{\Psi_1}^{\la} \otimes V_{\Psi_2}^{\mu} \right) \boxtimes \left( \mathsf{IC}_\la^{\mathsf{Spr}} \boxtimes \mathsf{IC}_\mu^{\mathsf{Spr}} \right),$$
then we have $( \mu^\flat_\Psi)_* \C \cong \bigoplus_{\la \in \tP^+_{n_1},\mu \in \tP^+_{n_2}} \left( V_{\Psi_1}^{\la} \otimes V_{\Psi_2}^{\mu} \right) \boxtimes \left( \mathsf{IC}_\la^{\mathsf{Spr}} \odot \mathsf{IC}_\mu^{\mathsf{Spr}} \right)[-2n_1n_2]$ by the smooth base change. In particular, we have
\begin{align*}
( \mu_\Psi )_* \C [2n_1n_2] & \cong (\mu_P)_* \bigr( \bigoplus_{\la \in \tP^+_{n_1},\mu \in \tP^+_{n_2}} \left( V_{\Psi_1}^{\la} \otimes V_{\Psi_2}^{\mu} \right) \boxtimes \left( \mathsf{IC}_\la^{\mathsf{Spr}} \odot \mathsf{IC}_\mu^{\mathsf{Spr}} \right)\bigr)\\
& \cong \bigoplus_{\la \in \tP^+_{n_1},\mu \in \tP^+_{n_2}} \left( V_{\Psi_1}^{\la} \otimes V_{\Psi_2}^{\mu} \right) \boxtimes \left( \mathsf{IC}_\la^{\mathsf{Spr}} \star \mathsf{IC}_\mu^{\mathsf{Spr}} \right) \\
& \cong \bigoplus_{\la \in \tP^+_{n_1},\mu \in \tP^+_{n_2}, \nu \in \tP^+_n} \left( V_{\Psi_1}^{\la} \otimes V_{\Psi_2}^{\mu} \right) \boxtimes \left( \mathsf{IC}_\nu^{\mathsf{Spr}} \right)^{\oplus c_{\la,\mu}^{\nu}}.
\end{align*}
This grading shift in this diagram, that is $t^{-n_1n_2}$, agrees with our adjustment by $|\Psi\setminus (\Psi_1 \cup \Psi_2)| = n_1 n_2$. Applying Theorem \ref{thm:ind}, we have
\begin{align*}
	X_\tth' (t) & = \sum_{\la,\mu,\nu \in \tP^+} c_{\la,\mu}^{\nu} s_{\nu} \cdot (\gdim \, V^{\Psi_1}_\la)(\gdim \, V^{\Psi_2}_\mu) \\
	& = \sum_{\la,\mu \in \tP^+} s_{\la} s_{\mu} \cdot (\gdim \, V^{\Psi_1}_\la)( \gdim \, V^{\Psi_2}_\mu) = X_{\tth_1}' (t) \cdot X_{\tth_2}' (t).
\end{align*}

This proves the factorization law for $X'_\tth(t)$.

\medskip

Finally, we compute the value of $X_{\bullet}(t)$ for a complete graph with $n$-vertices. The corresponding root ideal $\Psi$ is empty. It follows that $\mathscr X_\Psi = X$ and $\mathrm{Im} \, \mu_\Psi = \{0\} \subset \mathcal N$. Thus, we find $V_\Psi^\la = 0$ except for $\la = (1^n)$ and  $V_\Psi^{(1^n)}= H^{\bullet} ( X,\C )$. Using these, we conclude $X'_\tth (t) =[n]_t! \cdot e_n$ by $s_{1^n} = e_n$ and
$$\dim \, H^{\bullet} ( X,\C ) = [n]_t! \hskip 5mm \text{\cite[Proposition 6.7.17 and its proof]{CG97}}.$$

Thanks to Theorem \ref{thm:AN}, we conclude that $X_\tth(t) = X'_\tth(t)$ for $\tth \in \mathbb H_n$ as required.

\subsection{Proof of Theorem \ref{thm:alg}}
By Theorem \ref{thm:main}, Theorem \ref{thm:lusztig} 2), and Corollary \ref{cor:dec}, we have equalities
\begin{align}\nonumber
X_\tth = \sum_{\la \in \tP^+_n} s_\la \dim \, V_\Psi^{\la} & = \sum_{\la \in \tP^+_n, \gamma \in \tP_n} X^{\gamma} \dim \,( V_\Psi^{\la} \boxtimes H^\bullet ( \imath^*_{\gamma}\mathsf{IC}_\la)) \\
& = \sum_{\la \in \tP^+_n, \gamma \in \tP_n} X^{\gamma} \dim \,H^\bullet ( \imath^*_{\gamma}(\mathsf{m}_\Psi)_* \C ).\label{eqn:Xtofib}
\end{align}

Since $\mathsf{m}_\Psi$ is proper, we apply the proper base change to obtain
\begin{equation}
\imath_{\gamma}^* (\mathsf{m}_\Psi)_* \C \cong \imath_{\gamma}^* (\mathsf{m}_\Psi)_! \C \cong H^{\bullet}_c ( \mathsf{m}_\Psi^{-1} ([\gamma]),\C ).\label{eqn:bc}
\end{equation}
Since $[\gamma]$ is $(T \times \Gm)$-fixed, $\mathsf{m}_\Psi^{-1} ([\gamma])$ acquires the action of $(T \times \Gm)$. By \cite[Chap V\!I\!I]{Kum02},
$$\widetilde{G}(\!(z)\!) [\bv_{\varLambda_i}] \subset \P ( L ( \varLambda_i ))$$
is a partial affine flag variety, and its $(T \times \Gm)$-fixed points is $\tSym [\bv_{\varLambda_i}]$. In view of (\ref{eqn:ind-def}), we conclude $| \mathscr X_\Psi^{(T\times \Gm)} | < \infty$. This set remains the same if we replace $(T\times \Gm)$ with its generic one-dimensional subtorus $\Gm^{1}$ that we fix now. Then, $R:= H^{\bullet}_{\Gm^{(1)}} (\mathrm{pt},\C)$ is a polynomial ring of one variable.

By the localization theorem (see \cite[Chap. 7, Theorem 1.1]{AF23}), we find that the composition map
\begin{equation}
H^{\bullet}_{\Gm^{1}} ( \mathscr X_\Psi,\C ) \longrightarrow \bigoplus_{\gamma \in \tP_n} H^{\bullet}_{\Gm^{1}} ( \mathsf{m}_\Psi^{-1} ([\gamma]),\C ) \longrightarrow \bigoplus_{\gamma \in \tP_n} H^{\bullet}_{\Gm^{1}} ( \mathsf{m}_\Psi^{-1} ([\gamma])^{\Gm^{1}}, \C )\label{eqn:loc}
\end{equation}
is an isomorphism up to scalar extension to $\mathrm{Frac}(R)$, the fraction field of $R$.  By construction, $\mathscr X_\Psi$ admits a $\Gm^1$-stable affine pavings. It follows that $H^{\bullet}_{\Gm^{1}} ( \mathscr X_\Psi,\C )$ is free of rank
\begin{equation}
\dim \, H^{\bullet} ( \mathscr X_\Psi,\C ) = |\mathscr X_\Psi^{\Gm^{1}}| = \sum_{\gamma \in \tP_n} |\mathsf{m}_\Psi^{-1} ([\gamma])^{\Gm^{1}}|\label{eqn:fixpoints}
\end{equation}
over $R$. Here, (\ref{eqn:bc}) implies that
\begin{equation}
H^{\bullet}_{\Gm^1} ( \mathsf{m}_\Psi^{-1} ([\gamma]),\C ) \cong \bigoplus_{\la \in \tP^+_n,\gamma \in \tP_n} V_\Psi^\la \boxtimes H^{\bullet}_{\Gm^1} ( \imath^*_{\gamma} \mathsf{IC}_\la ),\label{eqn:loc-sum}
\end{equation}
and $\imath^*_{\gamma} \mathsf{IC}_\la$ is a complex of vector spaces. We have a Leray spectral sequence
\begin{equation}
E_2^{q,p}:=H^{q}_{\Gm^{(1)}} (\mathrm{pt},\C) \otimes H^{p} ( \imath^*_{\gamma} \mathsf{IC}_\la ) \Rightarrow H^{q+p}_{\Gm^1} ( \imath^*_{\gamma} \mathsf{IC}_\la )\label{eqn:Leray}
\end{equation}
for $\la \in \tP_n^+, \gamma \in \tP_n$. By \cite[(4.2)]{Lus81a} and $H^{\mathrm{odd}}_{\Gm^{(1)}} (\mathrm{pt},\C) \cong H^{\mathrm{odd}} (\mathbb{P}^{\infty},\C) \equiv 0$, we find that the spectral sequence (\ref{eqn:Leray}) degenerates at the $E_2$-stage. It follows that
$$\mathrm{rank}_{R} H^{\bullet}_{\Gm^1} ( \imath^*_{\gamma} \mathsf{IC}_\la ) = \dim \, H^{\bullet} ( \imath^*_{\gamma} \mathsf{IC}_\la ).$$
Summing up with respect to (\ref{eqn:loc-sum}), we conclude
$$\mathrm{rank}_{R} H^{\bullet}_{\Gm^1} ( \mathsf{m}_\Psi^{-1} ([\gamma]),\C ) = \dim \, H^{\bullet} ( \mathsf{m}_\Psi^{-1} ([\gamma]),\C ).$$

The second map of (\ref{eqn:loc}) is a direct sum of
$$H^{\bullet}_{\Gm^{1}} ( \mathsf{m}_\Psi^{-1} ([\gamma]),\C ) \rightarrow H^{\bullet}_{\Gm^{1}} ( \mathsf{m}_\Psi^{-1} ([\gamma])^{\Gm^{1}}, \C ),$$
that is an inclusion on the $R$-torsion free part (again by the localization theorem). Thus, we have
$$|\mathsf{m}_\Psi^{-1} ([\gamma])^{\Gm^{1}}| \ge \mathrm{rank}_{R} H^{\bullet}_{\Gm^1} ( \mathsf{m}_\Psi^{-1} ([\gamma]),\C ) \hskip 10mm \gamma \in \tP_n.$$
In order that (\ref{eqn:loc}) to be an isomorphism up to scalar extension to $\mathrm{Frac} ( R )$, we have necessarily
\begin{equation}
|\mathsf{m}_\Psi^{-1} ([\gamma])^{\Gm^{1}}| = \mathrm{rank}_{R} H^{\bullet}_{\Gm^1} ( \mathsf{m}_\Psi^{-1} ([\gamma]),\C ) = \dim \, H^{\bullet} ( \mathtt{m}_\Psi^{-1} ([\gamma]),\C )\label{eqn:fibeq}
\end{equation}
for each $\la \in \tP_n^+$ and $\gamma \in \tP_n$ by (\ref{eqn:fixpoints}). Now we verify  (\ref{eqn:coeff}) as:
\begin{align}\nonumber
X_\tth & \stackrel{\scriptsize (\ref{eqn:Xtofib})}{=} \sum_{\la \in \tP^+_n, \gamma \in \tP_n} X^{\gamma} \dim \,H^\bullet ( \imath^*_{\gamma}(\mathsf{m}_\Psi)_* \C ) \\
& \stackrel{\scriptsize (\ref{eqn:bc})}{=} \sum_{\gamma \in \tP_n} X^{\gamma} \dim H^\bullet ( \mathsf{m}_\Psi^{-1} ([\gamma],\C))  \stackrel{\scriptsize (\ref{eqn:fibeq})}{=} \sum_{\gamma \in \tP_n} X^{\gamma} \cdot |\mathsf{m}_\Psi^{-1}([\gamma])^{\Gm^1}|.\label{eqn:XTcount}
\end{align}

In the construction of $\mathscr X_\Psi$, we have applied the action of $\tbG_i$ inductively to $\mathscr X_\Psi^{(i+1)}$ from $i=n$. Its effect is seen in $\P ( L ( \varLambda_i ))$, and makes $\mathscr X_\Psi^{(i)}$ into the fiber bundle over a projective space (isomorphic to $\tbG_i /(\tbG_i\cap \tbG_{i+1})$) with its fiber $\mathscr X_\Psi^{(i+1)}$. The $\Gm^1$-fixed counterpart of $\tbG_i$ is the elements in
$$\Sym^{(i)}:=\left< s_j \mid i \le j < n, 0 \le j < h_i ( \Psi ) \right> \subset \tSym \hskip 5mm \text{for} \hskip 5mm 1 \le i \le  n.$$
The $\tbG_{i+1}$-action permutes the $\Gm^1$-fixed points in $\mathscr X_\Psi^{(i+1)}$. Thus, the $\Gm^1$-fixed points in $\mathscr X_\Psi^{(i)}$ is obtained as the translation of the $\Gm^1$-fixed points of $\mathscr X_\Psi^{(i+1)}$ by the application of
\begin{equation}
\{ 1,s_i,s_{i+1}s_i,\ldots,s_{h_i ( \Psi) -1}\cdots s_i\} = \Sym^{(i)}/(\Sym^{(i)}\cap \Sym^{(i+1)}). \label{eqn:refs}
\end{equation}
Let $\mathsf S^i$ denote the set of elements in (\ref{eqn:refs}). The set of $\Gm^1$-fixed points of $\mathscr X^{(i)}_\Psi$ is in bijection with
$$\mathsf S^{\ge i}:= \mathsf S^i \times \mathsf S^{i+1} \times \cdots \times \mathsf S^{n}.$$

If $[\gamma] = w[\bv_{\varLambda_n}]$ ($w \in \tSym_n$), then we have $\mathsf{m}_\Psi^{-1}([\gamma])^{\Gm^1} \cong \{\{x_j\}_{j=1}^n \in \mathsf S^{\ge 1} \mid x_1 x_{2} \cdots x_n = w \Sym_n\}$ by $\mathsf{Stab}_{\tSym_n} [\bv_{\varLambda_n}] = \Sym_n$. Since we have
$\left. e^{w\varLambda_n} \right|_{q=1,e^{\varLambda_0}=1} = X^{\gamma}$, we conclude
\begin{equation}
\mathsf{m}_\Psi^{-1}([\gamma])^{\Gm^1} \cong \{\{x_j\}_{j=1}^n \in \mathsf S^{\ge 1} \mid \left. e^{x_1 x_{2} \cdots x_n\varLambda_n} \right|_{q=1,e^{\varLambda_0}=1} = X^{\gamma}\}.\label{eqn:Tcount}
\end{equation}
Substituting (\ref{eqn:Tcount}) into (\ref{eqn:XTcount}), we conclude
\begin{align*}
X_\tth & = \sum_{\gamma \in \tP_n} X^{\gamma} |\mathsf{m}_\Psi^{-1}([\gamma])^{\Gm^1}|  = \sum_{\{x_j\}_{j=1}^n \in \mathsf S^{\ge 1}} \left. e^{x_i x_{i+1} \cdots x_n\varLambda_n} \right|_{q=1,e^{\varLambda_0}=1} \\
& = \left. \bigl(\mathtt{S}_{1,h_1(\Psi)} \cdots \mathtt{S}_{n,h_n(\Psi)} e^{\varLambda_n} \bigr) \right|_{q=1,e^{\varLambda_0}=1}
\end{align*}
as required.

\medskip

{\small
{\bf Acknowledgement:} This work was supported in part by JSPS KAKENHI Grant Number JP24K21192. The author thanks Tatsuyuki Hikita for discussions.}

{\footnotesize
\bibliography{ref}

\begin{thebibliography}{10}

\bibitem{AN22b}
Alex Abreu and Antonio Nigro.
\newblock Parabolic {L}usztig varieties and chromatic symmetric functions.
\newblock arXiv:2212.13497.

\bibitem{AN22a}
Alex Abreu and Antonio Nigro.
\newblock An update on {H}aiman's conjecture.
\newblock arXiv:2206.00073.

\bibitem{AB21}
Alex Abreu and Antonio Nigro.
\newblock Chromatic symmetric functions from the modular law.
\newblock {\em J. Combin. Theory Ser. A}, 180:Paper no. 105407, 2021.

\bibitem{AP18}
P.~Alexandersson and G.~Panova.
\newblock {LLT} polynomials, chromatic quasisymmetric functions and graphs with
  cycles.
\newblock {\em Discrete Math.}, 341(12):3453--3482, 2018.

\bibitem{AS20}
Per Alexandersson and Robin Sulzgruber.
\newblock A combinatorial expansion of vertical-strip {LLT} polynomials in the
  basis of elementary symmetric functions.
\newblock {\em Advances in Mathematics}, 400:Paper No. 108256, 2022.

\bibitem{AF23}
David Anderson and William Fulton.
\newblock {\em Equivariant Cohomology in Algebraic Geometry}, volume 210 of
  {\em Cambridge Studies in Advanced Mathematics}.
\newblock Cambridge University Press, 2023.

\bibitem{BeL94}
Joseph Bernstein and Valery Lunts.
\newblock {\em Equivariant Sheaves and Functors}.
\newblock Number 1578 in Lecture Notes in Mathematics. Springer Berlin,
  Heidelberg, 1994.

\bibitem{BM81}
W.~Borho and R.~MacPherson.
\newblock Representations des groupes de {W}eyl et homologie d'intersection
  pour les varietes nilpotentes.
\newblock {\em C. R. Acad. Sc. Paris, Ser. I Math.}, 292:707--710, 1981.

\bibitem{BC18}
Patrick Brosnan and Timothy~Y. Chow.
\newblock Unit interval orders and the dot action on the cohomology of regular
  semisimple {H}essenberg varieties.
\newblock {\em Adv. Math.}, 329:955--1001, 2018.

\bibitem{AB22}
Ana B\u{a}libanu and Peter Crooks.
\newblock Perverse sheaves and the cohomology of regular hessenberg varieties.
\newblock {\em Transformation Groups}, 2022.

\bibitem{Cel00}
Paola Cellini.
\newblock $ad$-nilpotent ideals of a {B}orel subalgebra.
\newblock {\em J. Algebra}, 225:130--141, 2000.

\bibitem{CG97}
Neil Chriss and Victor Ginzburg.
\newblock {\em Representation theory and complex geometry}.
\newblock Modern Birkh\"auser Classics. Birkh\"auser Boston, Inc., Boston, MA,
  2010.
\newblock Reprint of the 1997 edition.

\bibitem{CHSS}
S.~Clearman, M.~Hyatt, B.~Shelton, and M.~Skandera.
\newblock Evaluations of {H}ecke algebra traces at {K}azhdan-{L}usztig basis
  elements.
\newblock {\em Electron. J. Comb.}, 23(2), 2016.

\bibitem{BBDG}
Pierre Deligne, A.A. Beilinson, J.~Bernstein, and O.~Gabber.
\newblock Faisceaux pervers.
\newblock In {\em Analysis and topology on singular spaces, {I} ({L}uminy,
  1981)}, volume 100 of {\em Ast\'erisquerisque}. Soc. Math. France, Paris,
  1983.

\bibitem{DK22}
Fengming Dong and Khee~Meng Koh.
\newblock Foundations of the chromatic polynomials.
\newblock In Joanna~A. Ellis-Monaghan and Iain Moffatt, editors, {\em Handbook
  of the Tutte Polynomial and Related Topics}, pages 213--251. CRC Press, 2022.

\bibitem{GP13}
Mathieu Guay-Paquet.
\newblock A modular relation for the chromatic symmetric functions of
  (3+1)-free posets.
\newblock arXiv:1306.2400.

\bibitem{GP16}
Mathieu Guay-Paquet.
\newblock A second proof of the {S}hareshian--{W}achs conjecture, by way of a
  new {H}opf algebra.
\newblock arXiv:1601.05498, 2016.

\bibitem{HP19}
M.~Harada and M.~Precup.
\newblock The cohomology of abelian {H}essenberg varieties and the
  {S}tanley--{S}tembridge conjecture.
\newblock {\em Algebraic Combin.}, 2(6):1059--1108, 2019.

\bibitem{Hik24a}
Tatsuyuki Hikita.
\newblock A proof of the {S}tanley-{S}tembridge conjecture.
\newblock arXiv:2410.12758.

\bibitem{Hik24b}
Tatsuyuki Hikita.
\newblock $(q,t)$-chromatic symmetric functions.
\newblock in preparation, 2024.

\bibitem{HSh}
R.~Hotta and N.~Shimomura.
\newblock The fixed point subvarieties of unipotent transformations on
  generalized flag varieties and the {G}reen functions.
\newblock {\em Math. Ann.}, 241:193--208, 1979.

\bibitem{Kat23a}
Syu Kato.
\newblock A geometric realization of {C}atalan functions.
\newblock arXiv:2301.00862, January 2023.

\bibitem{Kum02}
Shrawan Kumar.
\newblock {\em Kac-{M}oody groups, their flag varieties and representation
  theory}, volume 204 of {\em Progress in Mathematics}.
\newblock Birkh\"auser Boston, Inc., Boston, MA, 2002.

\bibitem{Lus84}
G.~Lusztig.
\newblock Intersection cohomology complexes on a reductive group.
\newblock {\em Invent. Math.}, 75(2):205--272, 1984.

\bibitem{Lus81b}
George Lusztig.
\newblock Green polynomials and singularities of unipotent classes.
\newblock {\em Adv. in Math.}, 42(2):169--178, 1981.

\bibitem{Lus81a}
George Lusztig.
\newblock Singularities, character formulas, and a {$q$}-analog of weight
  multiplicities.
\newblock In {\em Analysis and topology on singular spaces, {II}, {III}
  ({L}uminy, 1981)}, volume 101 of {\em Ast\'erisque}, pages 208--229. Soc.
  Math. France, Paris, 1983.

\bibitem{Lus85}
George Lusztig.
\newblock Character sheaves {I}.
\newblock {\em Adv. in Math.}, 56:193--237, 1985.

\bibitem{Mac95}
I.~G. Macdonald.
\newblock {\em Symmetric functions and {H}all polynomials}.
\newblock Oxford Mathematical Monographs. The Clarendon Press, Oxford
  University Press, New York, second edition, 1995.
\newblock With contributions by A. Zelevinsky, Oxford Science Publications.

\bibitem{MPS92}
F.~De Mari, C.~Procesi, and M.A. Shayman.
\newblock Hessenberg varieties.
\newblock {\em Trans. Amer. Math. Soc.}, 332(2):529--534, 1992.

\bibitem{MV07}
I.~Mirkovi{\'c} and K.~Vilonen.
\newblock Geometric {L}anglands duality and representations of algebraic groups
  over commutative rings.
\newblock {\em Ann. of Math. (2)}, 166(1):95--143, 2007.

\bibitem{SP}
Martha Precup and Eric Sommers.
\newblock Perverse sheaves, nilpotent {H}essenberg varieties, and the modular
  law.
\newblock {\em Pure and Applied Mathematics Quarterly}, to appear.

\bibitem{SW16}
J.~Shareshian and M.L. Wachs.
\newblock Chromatic quasisymmetric functions.
\newblock {\em Adv. Math.}, 295:497--551, 2016.

\bibitem{Sta95}
R.P. Stanley.
\newblock A symmetric function generalization of the chromatic polynomial of a
  graph.
\newblock {\em Adv. Math.}, 111(1):166--194, 1995.

\bibitem{SS93}
R.P. Stanley and J.R. Stembridge.
\newblock On immanants of {J}acobi--{T}rudi matrices and permutations with
  restricted position.
\newblock {\em J. Combin. Theory Ser. A}, 62(2):261--279, 1993.

\bibitem{VX21}
Kari Vilonen and Ting Xue.
\newblock Notes on {H}essenberg varieties.
\newblock arXiv:2101.08652, 2021.

\end{thebibliography}
\bibliographystyle{hplain}}

\end{document}